\newtheorem*{rep@theorem}{\rep@title}
\newcommand{\newreptheorem}[2]{%
\newenvironment{rep#1}[1]{%
 \def\rep@title{#2 \ref{##1}}%
 \begin{rep@theorem}}%
 {\end{rep@theorem}}}
\theoremstyle{plain}
\newtheorem{proposition}{Proposition}[section]
\newtheorem{lemma}{Lemma}[section]
\newtheorem{theorem}{Theorem}[section]
\theoremstyle{definition}
\newtheorem{example}{Example}[section]
\theoremstyle{remark}
\newtheorem{remark}{Remark}[section]
\newcommand{\OO}{\mathcal{O}}
\newcommand{\DD}{{\mathcal D}}
\newcommand{\KK}{{\mathcal K}}
\newcommand{\de}{\delta}
\renewcommand{\ker}{\operatorname{ker}}
\newcommand{\ov}{\overline}
\renewcommand{\AA}{{\mathcal A}}
\newcommand{\FF}{{\mathcal F}}
\newcommand{\TT}{\mathcal{T}}
\newcommand{\XX}{{\mathcal X}}
\newcommand{\YY}{{\mathcal Y}}
\newcommand{\QQ}{{\mathcal Q}}
\newcommand{\VV}{{\mathcal V}}
\newcommand{\End}{\operatorname{End}}
\newcommand{\Aut}{\operatorname{Aut}}
\newcommand{\sub}{\subset}
\DeclareMathOperator{\Ext}{\mathrm{Ext}} 
\DeclareMathOperator{\Hom}{\mathrm{Hom}}
\DeclareMathOperator{\Spec}{\mathrm{Spec}}
\newcommand{\ot}{\otimes}
\newcommand{\id}{\operatorname{id}}
\begin{document}
\title[Bondal-Orlov-Bridgeland for DM Stacks]{Bondal-Orlov Fully
Faithfulness Criterion for Deligne-Mumford Stacks}

\author[B. Lim]{Bronson Lim}
\address{BL: Department of Mathematics \\ University of Utah \\ Salt Lake City,
  UT 84102, USA} 
\email{bcl@uoregon.edu}
\author[A. Polishchuk]{Alexander Polishchuk}
\address{AP: Department of Mathematics, 
    University of Oregon,     Eugene, OR 97403, USA; National Research University Higher
School of Economics; and Korea Institute for  Advanced Study} 
\email{apolish@uoregon.edu}

\subjclass[2010]{Primary 14F05; Secondary 13J70}
\keywords{Derived Categories, Fourier-Mukai Functors}

\begin{abstract}
  Suppose \(F\colon \mathcal{D}(X)\to \mathcal{T}\) is an exact functor from
  the bounded derived category of coherent sheaves on a smooth projective
  variety \(X\) to a triangulated category \(\mathcal{T}\).  If \(F\) possesses
  left and right adjoints, then the Bondal-Orlov criterion gives a simple way
  of determining if \(F\) is fully faithful. We prove a natural extension of this
  theorem to the case when $X$ is a smooth and proper DM stack with projective coarse
  moduli space.
\end{abstract}

\maketitle

\section{Introduction}
\label{sec:intro}

\subsection{Bondal-Orlov Criterion}

Suppose \(X\) is a smooth projective scheme over an algebraically closed field \(k\)
of characteristic zero and \(F\colon \mathcal{D}(X)\to \mathcal{T}\) is
an exact functor, with \(\mathcal{T}\) a triangulated category. One is often
interested in checking whether \(F\) embeds \(\mathcal{D}(X)\) as a full triangulated
subcategory of \(\mathcal{T}\). If \(F\) admits left and right adjoints, then the
following well-known Bondal-Orlov Criterion is the primary tool used,
\cite{bondal-orlov-sod,bridgeland-triangulated-99}.

\begin{theorem}
  The functor \(F\) is fully faithful if, and only if, it admits a right adjoint, a left adjoint $G$, with $G\circ F$ of Fourier-Mukai type,
  and 
  \begin{itemize}
  \item
  for any closed point
  \(x\in X\), one has
  \[
    \mathrm{Hom}_{\mathcal{T}}(F(\mathcal{O}_x),F(\mathcal{O}_x)) = k;
  \]
 \item
 for any pair of closed points \(x,y\in X\) one has
  \[
    \mathrm{Hom}_{\mathcal{T}}(F(\mathcal{O}_x),F(\mathcal{O}_y)[i])=
    0\text{ unless }x = y\text{ and }0\leq i\leq \dim(X).
  \]
  \end{itemize}
  \end{theorem}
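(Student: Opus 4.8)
The plan is to prove the substantive ``if'' direction; for the ``only if'' direction the Hom-conditions are the standard $\Ext$-groups between skyscrapers and, once the adjoints are granted, $GF\cong\id$ is of Fourier--Mukai type with kernel $\OO_\Delta$, so I focus on the converse. Assume then that $F$ has a right adjoint, a left adjoint $G$, that $GF\cong\Phi_P$ is of Fourier--Mukai type with kernel $P\in\mathcal{D}(X\times X)$, and that the two displayed conditions hold. The first observation is that, because $G$ is left adjoint to $F$, the functor $F$ is fully faithful if and only if the counit $\epsilon\colon GF\to\id_{\mathcal{D}(X)}$ is an isomorphism. Moreover, via $G\dashv F$ one has $\Hom_{\TT}(F\OO_x,F\OO_y[i])\cong\Hom_{\mathcal{D}(X)}(GF\OO_x,\OO_y[i])$, so, writing $Q_x:=GF\OO_x$, the hypotheses say precisely that $\Hom(Q_x,\OO_y[i])=0$ whenever $x\neq y$ (for all $i$) or $i\notin[0,\dim X]$, while $\Hom(Q_x,\OO_x)=k$.

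Next I would use the Fourier--Mukai hypothesis to reduce everything to closed points. Both $GF=\Phi_P$ and $\id=\Phi_{\OO_\Delta}$ are Fourier--Mukai, and on a smooth projective variety the assignment $P\mapsto\Phi_P$ is fully faithful, so the counit is induced by a morphism of kernels $P\to\OO_\Delta$; let $R$ be its cone, so that $\Cone(\epsilon)\cong\Phi_R$. Since an object of $\mathcal{D}(X\times X)$ vanishes as soon as all its derived restrictions $Li_x^{*}R$ to the fibres $\{x\}\times X$ vanish, and $\Phi_R(\OO_x)\cong Li_x^{*}R\cong\Cone(\epsilon_x)$ with $\epsilon_x\colon Q_x\to\OO_x$ the counit at $\OO_x$, we conclude that $F$ is fully faithful $\iff$ $\epsilon_x$ is an isomorphism for every closed point $x$. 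This is exactly the step where the Fourier--Mukai assumption is essential: for a general exact functor, vanishing of a cone on the spanning class $\{\OO_x\}$ would not force the cone to vanish.

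It then remains to analyze the single object $Q_x$ together with the map $\epsilon_x$. From the vanishing of $\Hom(Q_x,\OO_y[i])$ for $y\neq x$ one sees that $Q_x$ is set-theoretically supported at $x$; feeding the cohomology sheaves of $Q_x$ into the spectral sequence $\Ext^p(H^{-q}(Q_x),\OO_x)\Rightarrow\Hom(Q_x,\OO_x[p+q])$ and using the vanishing outside $[0,\dim X]$ pins the cohomological amplitude to $[0,0]$, so $Q_x$ is an honest coherent sheaf supported at $x$, and $\Hom(Q_x,\OO_x)=k$ forces it to be cyclic. Under $G\dashv F$ the identity of $F\OO_x$ corresponds to $\epsilon_x$, so $\epsilon_x\neq0$, and since $\OO_x$ is simple the map $\epsilon_x\colon Q_x\twoheadrightarrow\OO_x$ is surjective.

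The main obstacle is the final step: promoting this surjection to an isomorphism, i.e. showing $Q_x$ has length one. The stated conditions alone do not detect the length, since for a length-$\ell$ cyclic sheaf the groups $\Ext^i(Q_x,\OO_x)$ vanish automatically outside $[0,\dim X]$, so this is precisely where the global Fourier--Mukai structure must re-enter. Because each $Q_x$ is a sheaf concentrated in degree $0$, the kernel $P$ is flat over the first factor and finite over the diagonal of constant fibre length, and $\epsilon$ is a $\pr_1$-flat surjection $P\twoheadrightarrow\OO_\Delta$; the task becomes showing that its kernel $K$ (flat over $\pr_1$, with fibres $K_x$) vanishes. I expect to prove $K=0$ by the finer analysis of point-like objects on smooth projective varieties, combining Serre duality for $\Phi_P$ with the normalization $\Hom(Q_x,\OO_x)=k$, after which $\epsilon_x$ is an isomorphism for every $x$ and hence $F$ is fully faithful.
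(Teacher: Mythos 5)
Your reductions are correct and follow the same route the paper takes for its stacky generalization (spanning class of skyscrapers, reduction to the counit $\epsilon_x\colon Q_x\to\OO_x$ being an isomorphism, $Q_x$ a sheaf supported at $x$ by the spectral-sequence argument, cyclicity from $\Hom(Q_x,\OO_x)=k$, surjectivity of $\epsilon_x$, and flatness of the kernel $K$ of $P\to\OO_\Delta$ over the first factor). But the proof has a genuine gap exactly where you flag "the main obstacle": you never prove that $Q_x$ has length one, and the sketch you offer for it --- "combining Serre duality for $\Phi_P$ with the normalization $\Hom(Q_x,\OO_x)=k$" --- cannot work. Indeed, for $Q_x=\OO_Z$ with $Z$ any length-$\ell$ subscheme supported at $x$, Serre duality gives $\Ext^{\dim X}(\OO_x,\OO_Z)\cong\Hom(\OO_Z,\OO_x)^\vee=k$ and $\Ext^i(\OO_Z,\OO_y[i])$ satisfies all the stated vanishing, so these local data are blind to $\ell$, as you yourself observe. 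The missing ingredient is a genuinely global argument.

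What is actually needed (and what the paper runs, following Bridgeland) is the Hilbert-scheme argument. One shows $\ker(\epsilon_x)=0$ is implied by injectivity of $\Ext^1(\OO_x,\OO_x)\to\Ext^1(Q_x,\OO_x)$, via the identification $\Hom(\ker\epsilon_x,\OO_x)=\ker\bigl(\Ext^1(\OO_x,\OO_x)\to\Ext^1(Q_x,\OO_x)\bigr)$; by flatness of $K$ over the first projection it suffices to prove this injectivity for \emph{generic} $x$. Since each $Q_x$ is the structure sheaf of a length-$\ell$ subscheme supported at $x$, the flat family $P$ defines a classifying morphism $f\colon X\to\mathrm{Hilb}_\ell(X)$ which is injective on points (the support recovers $x$), hence finite, hence has injective differential at a generic point; composing with the injective Kodaira--Spencer map $T_{[Z]}\mathrm{Hilb}_\ell(X)\to\Ext^1(\OO_Z,\OO_Z)$ of the universal family yields injectivity of $\Ext^1(\OO_x,\OO_x)\to\Ext^1(Q_x,Q_x)$ for generic $x$, which gives what is required. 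Without this step (or some substitute for it) the argument only establishes that $\epsilon_x$ is a surjection of sheaves, which is strictly weaker than full faithfulness. A secondary, minor point: the assignment $P\mapsto\Phi_P$ is not fully faithful as a functor $\mathcal{D}(X\times X)\to\mathrm{Fun}$ (it is neither full nor faithful in general); the kernel-level morphism $P\to\OO_\Delta$ representing the counit should instead be obtained from the adjunction on kernels, as in Caldararu--Willerton.
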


This theorem has been extended to the quasi-projective \cite{martin-ffqproj} and gerby projective 
\cite{cal-k3} setting, to
the case when $X$ is allowed to have some singularities, and to the case of positive characteristic 
\cite{rms-ffpositivechar,salas-ff-09}.

Recent interest in derived categories of Deligne-Mumford stacks (see e.g., \cite{bergh-lunts-schnurer,hall-rydh-perfect}) warrants an
investigation of a similar criterion for this category. In this article, we
extend the Bondal-Orlov criterion to the class of smooth and proper Deligne-Mumford
stacks with projective coarse moduli.

In the case of stacks the notion of a $k$-point has to be replaced by that of a
{\it generalized point} $(x,\xi)$, where $x:\Spec(k)\to \XX$ is a morphism and
$\xi$ is an irreducible representation of $\Aut(x)$, see Proposition \ref{prop:spanning-pts}.  These pairs are considered
up to an isomorphism. For each generalized point, $(x,\xi)$ there is a natural coherent sheaf
$\OO_{x,\xi}$ on $\XX$, which is an analog of the skyscraper sheaf (see Sec.\ \ref{points-sec}).

\begin{theorem}\label{thm:bob-stacks} Let $\XX$ be a smooth and proper DM-stack
  wtih projective coarse moduli space over an algebraically closed field $k$ of
  characteristic zero.  Suppose \(F\colon \mathcal{D(X)\to T}\) is an exact
  functor with a right adjoint and a left adjoint \(G\colon \mathcal{T\to D(X)}\) such that
  \(G\circ F\) is of Fourier-Mukai type.  Then \(F\) is fully-faithful if and
  only if 
  \begin{itemize}
    \item  for each generalized point $(x,\xi)$ of \(\mathcal{X}\), one has
      \[
        \mathrm{Hom}_{\mathcal{T}}(F(\mathcal{O}_{x,\xi}),
        F(\mathcal{O}_{x,\xi})) = k;
      \]
    \item for each pair of generalized points $x,y$, one has
      \[
        \mathrm{Hom}_{\mathcal{T}}(F(\mathcal{O}_{x,\xi}),
        F(\mathcal{O}_{y,\eta})[i])=0\text{ unless } x\simeq y \text{ and }0\leq
        i\leq \dim(\mathcal{X}); \text{ and }
      \]
      \[
        \mathrm{Hom}_{\mathcal{T}}(F(\mathcal{O}_{x,\xi}),
        F(\mathcal{O}_{y,\eta}))=0\text{ unless } (x,\xi)\simeq (y,\eta).
      \]
  \end{itemize}
\end{theorem}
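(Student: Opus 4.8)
The plan is to prove both directions, with the forward (``only if'') implication being a local computation and the converse carrying the weight of the argument. For ``only if'', assume $F$ is fully faithful; then $\Hom_{\TT}(F\OO_{x,\xi}, F\OO_{y,\eta}[i]) \cong \Ext^i_{\XX}(\OO_{x,\xi}, \OO_{y,\eta})$, and I would compute the right-hand side in an étale-local quotient presentation $[\Spec R / H]$ around $x$, with $H=\Aut(x)$, where $\OO_{x,\xi}$ is the residual-gerbe skyscraper twisted by the irreducible $H$-representation $\xi$. When $x\not\simeq y$ the supports are disjoint and every $\Ext$ vanishes; when $x\simeq y$ an $H$-equivariant Koszul resolution gives $\Ext^i_{\XX}(\OO_{x,\xi},\OO_{x,\eta})\cong \Hom_H(\xi,\eta\otimes\wedge^i T_x\XX)$, which is concentrated in $0\le i\le \dim\XX$, equals $k\cdot\delta_{\xi\eta}$ in degree $0$ by Schur's lemma, and in particular is $k$ when $(x,\xi)\simeq(y,\eta)$. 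This yields all three conditions.

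For the converse, since $G$ is left adjoint to $F$, the functor $F$ is fully faithful if and only if the counit $\eps\colon \Phi:=G\circ F\to \id_{\DD(\XX)}$ is an isomorphism. By hypothesis $\Phi=\Phi_P$ is of Fourier--Mukai type with kernel $P\in\DD(\XX\times\XX)$, and as $\id=\Phi_{\OO_\Delta}$, the counit is induced by a morphism of kernels $u\colon P\to \OO_\Delta$; the goal becomes $\Cone(u)=0$. The adjunction $\Hom_{\DD(\XX)}(\Phi a, b[i])\cong \Hom_{\TT}(Fa, Fb[i])$ translates the hypotheses verbatim into statements about $\Phi$: for all generalized points $\Hom(\Phi\OO_{x,\xi},\OO_{y,\eta}[i])$ vanishes unless $x\simeq y$ and $0\le i\le\dim\XX$, vanishes in degree $0$ unless $(x,\xi)\simeq(y,\eta)$, and equals $k$ when $(x,\xi)\simeq(y,\eta)$ and $i=0$.

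The core is to show $\eps_{\OO_{x,\xi}}\colon \Phi\OO_{x,\xi}\to \OO_{x,\xi}$ is an isomorphism for every generalized point, following Bridgeland's method. First, the orthogonality $\Hom(\Phi\OO_{x,\xi},\OO_{y,\eta}[i])=0$ for $x\not\simeq y$ (all $i$), together with the spanning property of the generalized skyscrapers on $\XX\setminus\{x\}$ (Proposition \ref{prop:spanning-pts}), forces $\Phi\OO_{x,\xi}$ to be supported on the residual gerbe at $x$. Next, the vanishing of negative $\Hom$'s and of $\Hom$'s above degree $\dim\XX$, combined with Serre duality on the smooth proper stack $\XX$ and the simpleness forced by $\End=k$, pins $\Phi\OO_{x,\xi}$ down to a single simple generalized skyscraper $\OO_{x,\xi'}$; the degree-zero orthogonality then gives $\xi'\simeq\xi$. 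Under the adjunction $\eps_{\OO_{x,\xi}}\in\Hom(\Phi\OO_{x,\xi},\OO_{x,\xi})=k$ corresponds to $\id_{F\OO_{x,\xi}}$, which is nonzero because $\End(F\OO_{x,\xi})=k$, so $\eps_{\OO_{x,\xi}}$ is a nonzero endomorphism of the simple object $\OO_{x,\xi}$ and hence an isomorphism. To pass from this pointwise statement to $\Cone(u)=0$, set $C=\Cone(u)$ and note $Rp_{2*}\bigl(C\otimes^{L} p_1^*\OO_{x,\xi}\bigr)\cong\Cone(\eps_{\OO_{x,\xi}})=0$ for all $\xi\in\Irr(\Aut x)$; since the irreducibles recover all of $\Qcoh(BH)$, this gives $Li_x^{*}C=0$ along $\{x\}\times\XX$ for every $x$, and a Nakayama argument for $\DD^b_{\mathrm{coh}}(\XX\times\XX)$ then yields $C=0$, i.e.\ $\eps$ is an isomorphism and $F$ is fully faithful.

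The main obstacle is the middle step: adapting Bridgeland's argument that $\Phi\OO_{x,\xi}$ is concentrated in a single degree (rather than a genuine complex) and is a \emph{simple} generalized skyscraper, now that skyscrapers carry the representation-theoretic data of $\Aut(x)$ and that the classical identity $\Ext^*(\OO_x,\OO_x)=\wedge^*T_x$ must be replaced by its $\Aut(x)$-equivariant refinement. The secondary technical points are ensuring that the Fourier--Mukai formalism on the product stack $\XX\times\XX$ (base change, convolution, the Fourier--Mukai nature of the relevant adjoints, and the detection of vanishing by generalized points) is valid for smooth proper DM stacks with projective coarse moduli, which is where the hypotheses guaranteeing the adjoints and Serre duality are used.
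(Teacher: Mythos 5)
There is a genuine gap at the step you yourself flag as ``the main obstacle'': the claim that the vanishing conditions, Serre duality, and $\End_{\TT}(F\OO_{x,\xi})=k$ ``pin $\Phi\OO_{x,\xi}$ down to a single simple generalized skyscraper'' is not a valid pointwise deduction, and no argument is supplied for it. Under adjunction the hypotheses tell you that $\QQ_{x,\xi}:=\Phi\OO_{x,\xi}$ is a sheaf supported at $x$ whose maximal semisimple quotient is exactly $\OO_{x,\xi}$ (since $\Hom(\QQ_{x,\xi},\OO_{y,\eta})=\Hom_{\TT}(F\OO_{x,\xi},F\OO_{y,\eta})$ is $k$ for $(y,\eta)\simeq(x,\xi)$ and $0$ otherwise). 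But this is consistent with $\QQ_{x,\xi}$ being, e.g., the structure sheaf of a fat point concentrated at $x$: for $Z$ a length-$\ell$ thickening of $x$ one still has $\Hom(\OO_Z,\OO_x)=k$ and $\Ext^i(\OO_Z,\OO_{y,\eta})$ vanishing in all the required ranges. Note also that $\End_{\TT}(F\OO_{x,\xi})$ computes $\Hom(\QQ_{x,\xi},\OO_{x,\xi})$, not $\End(\QQ_{x,\xi})$, so it cannot force simplicity. Ruling out the fat-point scenario is precisely where the real work lies, and it is intrinsically a \emph{family} argument, not a pointwise one: one must use that all the $\QQ_{x,\xi}$ come from a single kernel $\QQ$ flat over the first factor, observe that the kernel $\KK$ of the surjection $\QQ\to\Delta_*\OO_{\XX}$ is then flat, and show $\KK$ vanishes at a \emph{generic} point, which is equivalent to generic injectivity of $\Ext^1(\OO_x,\OO_x)\to\Ext^1(\QQ_x,\QQ_x)$. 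The paper obtains this via Bridgeland's Hilbert-scheme argument: the fibers $\QQ_x$ are structure sheaves of length-$\ell$ subschemes containing $x$, the classifying map to $\mathrm{Hilb}_\ell$ is finite, hence generically injective on tangent spaces. Your proposal never invokes this mechanism, so the passage from ``supported at $x$ with cosocle $\OO_{x,\xi}$'' to ``$\cong\OO_{x,\xi}$'' is unsupported.

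A second omission, specific to the stacky setting, is how to run any Hilbert-scheme argument at points with nontrivial ineffective stabilizer, where the residual gerbe is not a point and skyscrapers carry representation data. The paper handles this with the BGLL equivalence: on a generic open substack $[Y/G]$ with $N\subset G$ the kernel of the action and $H=G/N$ acting freely, $\mathfrak{Coh}[Y/G]$ is identified with $\bar\alpha$-twisted sheaves on the scheme $(\mathrm{Irr}(N)\times Y)/H$, under which $\OO_{x,\xi}$ becomes an honest (twisted) skyscraper; only then does the finite-map-to-$\mathrm{Hilb}_\ell$ argument apply, after passing to an \'etale cover trivializing the Brauer class. Your proposal's closing remarks gesture at the equivariant refinement of $\Ext^*(\OO_x,\OO_x)=\wedge^*T_x$ but propose no substitute for this reduction, so the converse direction remains unproved. (Your ``only if'' direction and the final Nakayama-type globalization are fine, and the latter essentially coincides with the paper's flatness argument.)
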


\begin{remark}
 Note that the natural dg-enhancement of $\DD(\XX)$ is saturated. 
 Thus, in the case when the triangulated category \(\mathcal{T}\) admits a dg enhancement and
  \(F\) lifts to the dg level, the conditions that \(F\) admits left and right 
  adjoints and that \(G\circ F\) is of Fourier-Mukai type are automatic
 (see \cite[Theorem 1.3]{genovese-adjunctions}, \cite[Theorem 1.2]{bzfn-integraltransforms}). 
\end{remark}

\subsection{Outline of the paper}

The proof will proceed similarly to the proof of the original Bondal-Orlov criterion in \cite[Section 7.1]{huybrechts-fm}. 
We collect the relevant background material in Section \ref{sec:prelims}. 
The key technical idea is to use the trade-off between nontrivial generic stabilizer and ``gerbyness".
This was observed by Bergh-Gorchinskiy-Larsen-Lunts in \cite{bgll-17} in the form of an equivalence of
the category of \(G\)-equivariant coherent sheaves corresponding to an ineffective action of a finite group $G$,
with some ``gerby" category. We call this the {\it BGLL equivalence} and recall the details in Section \ref{sec:bgll}.
We complete the proof in Section \ref{sec:main-thm}.

\bigskip

\noindent
{\it Conventions}.
Throughout \(k\) will be an algebraically closed field of characteristic zero.
Unless otherwise stated, our stacks will be Deligne-Mumford, smooth and proper over \(k\) with
projective coarse moduli. Note that by \cite[Thm.\ 4.4]{kresch-geometrydm}, any such stack is a quotient stack of a 
quasi-projective scheme by a linear algebraic group.
All functors are assumed to be derived. The bounded
derived category of coherent sheaves on \(\mathcal{X}\) is denoted by
\(\mathcal{D(X)}\). 

\bigskip

\noindent
{\it Acknowledgments}.
The authors are grateful to the anonymous referee for helpful comments and suggestions.
A.P. is supported in part by the NSF grant DMS-1700642 and by the Russian Academic Excellence Project `5-100'.
He is grateful to Jarod Alper for an enlightening discussion. 

\section{Preliminaries on DM Stacks and Triangulated Categories}
\label{sec:prelims}

\subsection{Serre Duality}

Since our stacks are smooth and proper, the exotic inverse image functor
\[
  p^!\colon\mathcal{D}(\mathrm{Spec}(k))\to \mathcal{D(X)}
\] 
is defined, see \cite{nironi-08}, and we set \(\omega_{\mathcal{X}} =
p^!\mathcal{O}_{\mathrm{Spec}(k)}\) to be the dualizing sheaf on
\(\mathcal{X}\). Moreover, the associated endofunctor \(S\colon\mathcal{D(X)\to
D(X)}\) given by
\[
  S(\mathcal{F}) = (\mathcal{F}\otimes\omega_{\mathcal{X}})[\dim(\mathcal{X})]
\]
is a Serre functor for \(\mathcal{D(X)}\).

\subsection{Points}\label{points-sec}

By a closed point of \(\mathcal{X}\), we mean a morphism
\(x\colon\mathrm{Spec}(k)\to \mathcal{X}\). Any closed point factorises over the inclusion of
{\it residual gerbe} as
\[\mathrm{Spec}(k)\xrightarrow{p}B\mathrm{Aut}(x)\xrightarrow{\iota_x}\mathcal{X}\] 
Here, \(\mathrm{Aut}(x)\) is the finite stabilizer group of \(x\), \(B\mathrm{Aut}(x) \cong
[\mathrm{pt}/\mathrm{Aut}(x)]\) is the classifying stack, and $p$ is its canonical \'etale atlas. 

For any finite group \(G\), Maschke's Theorem gives a completely orthogonal decomposition
\begin{equation}
    \mathcal{D}(BG) = \bigoplus\limits_{\xi\in \mathrm{Irr}(G)}
    \mathcal{D}(\mathrm{Spec}(k))\otimes\xi.
\label{eqn:sod-bg}
\end{equation}
For any closed point, \(x\colon\mathrm{Spec}(k)\to \mathcal{X}\), and
irreducible representation, \(\xi\in\mathrm{Irr(Aut}(x))\), we denote
by \(\mathcal{O}_{x,\xi}\) the sheaf
\(\iota_{x\ast}(\mathcal{O}_{\mathrm{Spec}(k)}\otimes\xi)\). We will think of
the pair \( (x,\xi)\) as a \textit{generalized point} with structure sheaf
\(\mathcal{O}_{x,\xi}\).

Note that $p_*\OO_{\mathrm{Spec}(k)}$ is the regular $\mathrm{Aut(x)}$-representation, hence, $\OO_x:=x_*\OO_{\mathrm{Spec}(k)}$
has the following decomposition:
\begin{equation}\label{Ox-decomposition}
\OO_x\simeq \bigoplus_{\xi\in\mathrm{Irr(Aut}(x))} \xi^\vee\otimes \OO_{x,\xi}.
\end{equation}

\subsection{Fourier-Mukai Functors}

Let \(\mathcal{X,Y}\) be smooth and proper DM stacks of finite
type over \(k\) with generically trivial stabilizers. Any object
\(\mathcal{P}\in \mathcal{D(X\times Y)}\), determines an exact functor
\[
  \Phi_{\mathcal{P}}\colon \mathcal{D(X)\to D(Y)}
\]
defined by the formula
\[
  \Phi_{\mathcal{P}}(\mathcal{E}) =
  \pi_{\mathcal{Y}\ast}(\pi_{\mathcal{X}}^\ast(\mathcal{E}) \otimes
  \mathcal{P}).
\]
We will say that an exact functor \(F\colon \mathcal{D(X)\to D(Y)}\) is of
\textit{Fourier-Mukai type} or an \textit{integral functor} if \(F\cong
\Phi_{\mathcal{P}}\) for some \(\mathcal{P}\in\mathcal{D(X\times
Y)}\). Since Serre duality holds in this setting, such functors have left and right adjoints given by the usual formulas
(see \cite[Prop.\ 5.9]{huybrechts-fm}). In particular, the criterion of Theorem \ref{thm:bob-stacks} is applicable to functors of Fourier-Mukai type.

\begin{example}
  Let \(\mathcal{X}\) be a DM stack, then the diagonal object
  \(\Delta_\ast\mathcal{O}_{\mathcal{X}}\in\mathcal{D}(\mathcal{X\times X})\) is
  a kernel for the identity functor
  \[
    \Phi_{\Delta_\ast\mathcal{O}_{\mathcal{X}}}\cong \mathrm{Id}\colon
    \mathcal{D(X)\to D(X)}.
  \]
  Note that \(\Delta\colon\mathcal{X\to X\times X}\) is finite and so the
  argument in \cite[Example 5.4(i)]{huybrechts-fm} carries over exactly.
  \label{ex:identity}
\end{example}

\subsection{Spanning classes}

Recall that a \textit{spanning class} in a triangulated category \(\mathcal{T}\) is a
subclass of objects \(\Omega\subset \mathcal{T}\) such that for all \(t\in
\mathcal{T}\) we have:
\begin{align*}
  & \mathrm{Hom}_{\mathcal{T}}(\omega[i],t)=0\text{ for all
  }i\in\mathbb{Z}\text{ and for all }\omega\in\Omega\text{ implies }t=0; \\
  & \mathrm{Hom}_{\mathcal{T}}(t,\omega[i])=0\text{ for all
  }i\in\mathbb{Z}\text{ and for all }\omega\in\Omega\text{ implies }t=0.
\end{align*}

In the case of (quasi-)projective varieties, it is well known that the structure sheaves of closed points form a
spanning class. We need an analogue in the stacky setting.
The following proposition seems to be well known and follows analagously to
\cite{huybrechts-fm}. We include the proof for
completeness and for lack of a suitable reference.

\begin{proposition}
  The subclass of objects 
  \[
    \Omega_{pt} = \{\mathcal{O}_{x,\xi}\mid x\colon\mathrm{Spec}(k)\to
    X\text{ and }\xi\in\mathrm{Irr}(\mathrm{Aut}(x))\}
  \]
  form a spanning class in \(\mathcal{D(X)}\).
  \label{prop:spanning-pts}
\end{proposition}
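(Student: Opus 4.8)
The plan is to follow the classical argument for skyscraper sheaves (\cite[Prop.~3.17]{huybrechts-fm}), adapting each step to account for the residual gerbes. First I would reduce the two defining conditions of a spanning class to a single one using the Serre functor $S$. By the projection formula $\mathcal{O}_{x,\xi}\otimes\omega_{\mathcal{X}}\cong\iota_{x\ast}(\xi\otimes\iota_x^\ast\omega_{\mathcal{X}})$, and since $\iota_x^\ast\omega_{\mathcal{X}}$ is a line bundle on $B\operatorname{Aut}(x)$, i.e.\ a character $\chi_x\in\operatorname{Irr}(\operatorname{Aut}(x))$, we get $S(\mathcal{O}_{x,\xi})\cong\mathcal{O}_{x,\xi\otimes\chi_x}[\dim\mathcal{X}]$. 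As $\xi\mapsto\xi\otimes\chi_x$ permutes $\operatorname{Irr}(\operatorname{Aut}(x))$, the Serre functor preserves $\Omega_{pt}$ up to shift, so the isomorphism $\operatorname{Hom}(A,B)\cong\operatorname{Hom}(B,S(A))^{\ast}$ interchanges the two conditions. It therefore suffices to prove, say, that $\operatorname{Hom}(E,\mathcal{O}_{x,\xi}[i])=0$ for all generalized points $(x,\xi)$ and all $i\in\mathbb{Z}$ forces $E=0$.

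For this implication I would argue by contradiction using the standard $t$-structure on $\mathcal{D(X)}=D^b(\operatorname{Coh}\mathcal{X})$. Assume $E\neq 0$ and let $i_0$ be the largest index with $\mathcal{H}^{i_0}(E)\neq 0$. Choose a closed point $x$ in the support of the coherent sheaf $\mathcal{H}^{i_0}(E)$. The central point is to produce a nonzero map from $\mathcal{H}^{i_0}(E)$ to some $\mathcal{O}_{x,\xi}$: restricting to the residual gerbe, the underived pullback $\iota_x^\ast\mathcal{H}^{i_0}(E)$ is a nonzero $\operatorname{Aut}(x)$-representation, hence admits an irreducible quotient $\xi$; by the adjunction $\iota_x^\ast\dashv\iota_{x\ast}$ one has $\operatorname{Hom}_{\mathcal{X}}(\mathcal{H}^{i_0}(E),\mathcal{O}_{x,\xi})\cong\operatorname{Hom}_{\operatorname{Aut}(x)}(\iota_x^\ast\mathcal{H}^{i_0}(E),\xi)\neq 0$.

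It then remains to lift such a map to a nonzero element of $\operatorname{Hom}(E,\mathcal{O}_{x,\xi}[-i_0])$. I would use the truncation triangle $\tau_{<i_0}E\to E\to\mathcal{H}^{i_0}(E)[-i_0]\to$ and apply $\operatorname{Hom}(-,\mathcal{O}_{x,\xi}[-i_0])$. Since $\mathcal{O}_{x,\xi}$ sits in cohomological degree $0$ while $\tau_{<i_0}E$ lives in degrees $<i_0$, the vanishing of negative $\operatorname{Ext}$-groups between coherent sheaves shows that $\operatorname{Hom}(\tau_{<i_0}E,\mathcal{O}_{x,\xi}[-i_0])$ and $\operatorname{Hom}(\tau_{<i_0}E,\mathcal{O}_{x,\xi}[-i_0-1])$ both vanish, whence $\operatorname{Hom}(\mathcal{H}^{i_0}(E),\mathcal{O}_{x,\xi})\xrightarrow{\sim}\operatorname{Hom}(E,\mathcal{O}_{x,\xi}[-i_0])$. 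The nonzero map from the previous step then contradicts the hypothesis, forcing $E=0$.

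I expect the main obstacle to be the geometric input of the middle paragraph: correctly relating the support of a coherent sheaf, its restriction to the residual gerbe, and the representation theory of the (possibly nonabelian) stabilizer $\operatorname{Aut}(x)$, so that one always lands on an honest generalized point $\mathcal{O}_{x,\xi}$ rather than merely on $\mathcal{O}_x$. The decomposition \eqref{Ox-decomposition} together with Maschke's theorem \eqref{eqn:sod-bg} is exactly what makes this step work; the truncation bookkeeping and the Serre-duality reduction are then essentially formal.
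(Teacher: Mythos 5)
Your proposal is correct and follows essentially the same route as the paper's proof: reduce to one vanishing condition via Serre duality, pass to the top nonvanishing cohomology sheaf $\mathcal{H}^{i_0}(E)$, restrict to a residual gerbe at a point of its support to extract a nonzero map onto some $\mathcal{O}_{x,\xi}$, and lift it to $\operatorname{Hom}(E,\mathcal{O}_{x,\xi}[-i_0])$. The only differences are cosmetic: you use the truncation triangle where the paper uses the hypercohomology spectral sequence, and you spell out the Serre-duality reduction (via $S(\mathcal{O}_{x,\xi})\cong\mathcal{O}_{x,\xi\otimes\chi_x}[\dim\mathcal{X}]$) which the paper leaves implicit.
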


\begin{proof}
  By Serre duality, it suffices to show that if
  \(\mathcal{F}\in\mathcal{D(X)}\) is not zero, then there exists a
  \(\mathcal{O}_{x,\xi}\) and \(i\in\mathbb{Z}\) such that
  \[
    \mathrm{Hom}(\mathcal{F},\mathcal{O}_{x,\xi}[i])\neq 0
  \]
  Since \(\mathcal{F}\neq 0\) and is bounded, there exists a maximal
  \(m\) such that \(m\)th cohomology sheaf \(\mathcal{H}^m:=\mathcal{H}^m(\mathcal{F})\) is nonzero. Now
  using the spectral sequence
  \[
    E_2^{p,q} = \mathrm{Hom}(\mathcal{H}^{-q},\mathcal{O}_{x,\xi}[p])\Rightarrow
    \mathrm{Hom}(\mathcal{F},\mathcal{O}_{x,\xi}[p+q])
  \]
  we see that the differentials with source \(E_r^{0,-m}\) are zero for all
  \(r\geq 2\) and, similarly to the non-stacky case, all the differentials with
  target \(E_r^{0,-m}\) are also trivial. Thus, \(E_\infty^{0,-m} =
  E_2^{0,-m}\). Since \(\mathcal{H}^{m}\) is a sheaf,
  there exists a residual gerbe \(\iota_x\colon
  B\mathrm{Aut}(x)\to \mathcal{X}\) such that \(\iota_x^\ast\mathcal{H}^m\neq
  0\). 

  Since \(\iota_x^\ast\mathcal{H}^m\neq 0\), there exists an irreducible
  representation \(\xi\) and a nonzero morphism \(\mathcal{H}^m\to
  \mathcal{O}_{\mathrm{Spec}(k)}\otimes\xi\). Since
  \[
    E_\infty^{0,-m} =
    E_2^{0,-m}=\mathrm{Hom}(\mathcal{H}^m,\mathcal{O}_{x,\xi})\neq 0,
  \]
  we conclude \(\mathrm{Hom}(\mathcal{F},\mathcal{O}_{x,\xi}[-m])\neq 0\)
  as desired.

\end{proof}

\begin{remark}
  The spanning class in Proposition \ref{prop:spanning-pts} should be thought of
  as a refinement of the spanning class
  \[
    \Omega = \{ \mathcal{O_Z}\mid \mathcal{Z}\text{ is a closed substack of
    }\mathcal{X}\text{ and }\pi(\mathcal{Z})\text{ is a closed point in }X\}
  \]
  where \(\pi\colon\mathcal{X}\to X\) is the coarse moduli, see
  \cite{CT-derivedmckay}.
\end{remark}

Recall that spanning classes can be used to check fully-faithfulness of
exact functors.

\begin{proposition}[\protect{\cite[Thm.\ 2.3]{bridgeland-triangulated-99}}]
  Suppose \(F\colon \mathcal{T\to T'}\) is an exact functor with left and
  right adjoints. Then \(F\) is fully-faithful if and only if there exists a
  spanning class \(\Omega\subset\mathcal{T}\) such that for all
  \(\omega,\omega'\in\Omega\) the induced map
  \[
    \Hom_{\TT}(\omega,\omega'[i])\to \Hom_{\TT'}(F(\omega),F(\omega')[i])
  \]
  is an isomorphism for all \(i\).
  \label{spanning-ff-prop}
\end{proposition}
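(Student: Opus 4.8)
The forward implication is immediate: if $F$ is fully faithful it induces isomorphisms on all $\Hom$-spaces, so one may simply take $\Omega=\TT$, which is trivially a spanning class. The content is the converse, and my plan is to reduce full faithfulness to the vanishing of a cone, bootstrapping from the spanning objects to all objects. Write $G$ for the right adjoint and $H$ for the left adjoint of $F$, with unit $\eta\colon\id_\TT\to G\circ F$ and counit $\eps\colon H\circ F\to\id_\TT$. Since $G$ is right adjoint to $F$, the functor $F$ is fully faithful exactly when $\eta$ is an isomorphism of functors, so it suffices to prove $\Cone(\eta_a)=0$ for every $a\in\TT$.

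First I would treat the spanning objects. Fix $\omega\in\Omega$. Under the adjunction isomorphism $\Hom_{\TT'}(F\omega',F\omega[i])\cong\Hom_\TT(\omega',GF\omega[i])$, naturality of $\eta$ identifies the map induced by $F$,
\[
\Hom_\TT(\omega',\omega[i])\to\Hom_{\TT'}(F\omega',F\omega[i]),
\]
with post-composition by $\eta_\omega$. By hypothesis the left-hand map is an isomorphism for all $\omega'\in\Omega$ and all $i$, hence so is post-composition with $\eta_\omega$; the long exact sequence obtained by applying $\Hom_\TT(\omega',-)$ to the triangle $\omega\xrightarrow{\eta_\omega}GF\omega\to\Cone(\eta_\omega)\to\omega[1]$ then gives $\Hom_\TT(\omega',\Cone(\eta_\omega)[i])=0$ for all $\omega'\in\Omega$ and all $i$. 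The first spanning property forces $\Cone(\eta_\omega)=0$, so $\eta_\omega$ is an isomorphism for every $\omega\in\Omega$.

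Next I would run the symmetric argument with the left adjoint $H$ and the counit $\eps$: the same computation, now using pre-composition by $\eps_\omega$ and the \emph{second} spanning property, shows $\eps_\omega\colon HF\omega\to\omega$ is an isomorphism for every $\omega\in\Omega$. Combining this with the adjunction $\Hom_{\TT'}(F\omega,Fa[i])\cong\Hom_\TT(HF\omega,a[i])$ produces, for every object $a\in\TT$ and every $\omega\in\Omega$, a natural isomorphism
\[
\Hom_\TT(\omega,a[i])\xrightarrow{\ \sim\ }\Hom_{\TT'}(F\omega,Fa[i])
\]
induced by $F$.

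Finally I would bootstrap to arbitrary objects. For any $a\in\TT$, the displayed isomorphism together with the adjunction $\Hom_{\TT'}(F\omega,Fa[i])\cong\Hom_\TT(\omega,GFa[i])$ shows (again by naturality of $\eta$) that $\eta_a$ induces isomorphisms $\Hom_\TT(\omega[j],a)\xrightarrow{\sim}\Hom_\TT(\omega[j],GFa)$ for all $\omega\in\Omega$ and all $j$. The long exact sequence of $\Hom_\TT(\omega[j],-)$ applied to $a\xrightarrow{\eta_a}GFa\to\Cone(\eta_a)\to a[1]$ then yields $\Hom_\TT(\omega[j],\Cone(\eta_a))=0$, and the first spanning property gives $\Cone(\eta_a)=0$. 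Hence $\eta$ is a natural isomorphism and $F$ is fully faithful. I expect the only delicate point to be the bookkeeping in these identifications---verifying that the $F$-induced maps become, under the adjunctions, genuine pre- or post-compositions with the unit or counit, and that the resulting isomorphisms are natural in the varying object. It is precisely this that makes \emph{both} adjoints and \emph{both} spanning conditions necessary: one pass promotes the spanning objects, and a second pass, run against all objects, is needed to reach full faithfulness.
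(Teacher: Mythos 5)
Your proof is correct and is essentially the standard argument of Bridgeland's Theorem 2.3 (also \cite[Prop.\ 1.49]{huybrechts-fm}); the paper itself only cites this result without reproving it. The three-pass structure — unit is an isomorphism on spanning objects, counit is an isomorphism on spanning objects, then bootstrap to all objects via the cone of the unit and the first spanning property — is exactly the cited proof, and your identifications of the $F$-induced maps with pre-/post-composition by the counit/unit under the adjunctions are the right (and only delicate) verifications.
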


\subsection{Some Lemmas}

We need the following criterion for a complex to be a sheaf, flat over the base, as in
\cite{bridgeland-triangulated-99}.

\begin{lemma}
  Let \(\pi\colon \mathcal{S\to T}\) be a morphism of DM stacks, and for each closed
  point \(t\colon\mathrm{Spec}(k)\to \mathcal{T}\), let \(j_t\colon
  \mathcal{S}_t\to \mathcal{S}\) denote the inclusion of the fiber
  \(\mathcal{S}_t= \mathcal{S}\times_\mathcal{T}\mathrm{Spec}(k)\). Let
  \(\mathcal{Q}\) be an object of \(\mathcal{D(S)}\) such that for all \(t\colon
  \mathrm{Spec}(k)\to \mathcal{T}\), the derived restriction
  \(j_t^\ast(\mathcal{Q})\) is a sheaf on \(\mathcal{S}_t\). Then
  \(\mathcal{Q}\) is a sheaf on \(\mathcal{S}\), flat over \(\mathcal{T}\).
  \label{lem:flatness-criterion}
\end{lemma}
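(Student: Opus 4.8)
The plan is to reduce the assertion to the classical scheme-theoretic statement (\cite[Lemma 4.3]{bridgeland-triangulated-99}, \cite[Lemma 3.31]{huybrechts-fm}) and then to run its cohomological engine, which adapts without change once the stacky bookkeeping is in place. Both conclusions, namely that $\QQ$ is concentrated in cohomological degree $0$ and that this single sheaf is flat over $\TT$, are local in the smooth (equivalently \'etale) topology on $\SS$ and on $\TT$. First I would choose smooth atlases $b\colon V\to\TT$ and $a\colon U\to \SS\times_\TT V$ with $U,V$ schemes, producing a morphism of schemes $\pi_U\colon U\to V$ and the flat pullback $\QQ_U$ of $\QQ$. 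Since atlas maps are flat, derived restriction to a fibre commutes with these pullbacks by flat base change: for a closed point $v\in V$ lying over $t\in\TT$, the complex $j_v^\ast\QQ_U$ is the flat pullback of $j_t^\ast\QQ$ along the induced map $U_v\to\SS_t$, hence is a sheaf because $j_t^\ast\QQ$ is. Thus the hypotheses of the scheme case hold for $\QQ_U$ over $\pi_U$, and its conclusion descends along the faithfully flat atlases: $\QQ$ is a sheaf and flat over $\TT$ because both properties may be checked after the smooth covers $a$ and $b$.

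For the scheme case I would argue as follows, working locally so that $\TT=\Spec A$ with $A$ Noetherian and $t$ ranging over closed points. The functor $Lj_t^\ast$ is right t-exact, so if $n$ is the largest integer with $\HH^n(\QQ)\neq 0$ then $\HH^n(Lj_t^\ast\QQ)\cong j_t^\ast\HH^n(\QQ)$. As $Lj_t^\ast\QQ$ is a sheaf, this vanishes whenever $n\neq 0$; since it vanishes for every $t$, Nakayama forces $\HH^n(\QQ)=0$, a contradiction unless $n=0$. Hence $\HH^i(\QQ)=0$ for $i>0$, and writing $\HH^0:=\HH^0(\QQ)$ I would invoke the truncation triangle $\tau_{\le -1}\QQ\to\QQ\to\HH^0\to\tau_{\le-1}\QQ[1]$. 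Applying $Lj_t^\ast$ and comparing cohomology in degree $-1$ in the resulting long exact sequence, the terms coming from $\tau_{\le -1}\QQ$ and from $\QQ$ both vanish (right t-exactness and the sheaf hypothesis), which yields $\Tor_1^A(\HH^0,k(t))=0$ for all $t$. By the local criterion of flatness, $\HH^0$ is flat over $\TT$.

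Finally, with $\HH^0$ now flat, $Lj_t^\ast\HH^0$ is concentrated in degree $0$, so applying $Lj_t^\ast$ to the same triangle and chasing the long exact sequence shows $Lj_t^\ast(\tau_{\le-1}\QQ)=0$ for every closed point $t$; the top-cohomology-plus-Nakayama argument then gives $\tau_{\le-1}\QQ=0$. Therefore $\QQ\cong\HH^0$ is a sheaf, flat over $\TT$, and descending through the atlases completes the proof. I expect the main obstacle to be not the homological algebra, which is formal, but the careful verification of flat base change for the fibre restrictions $j_t^\ast$ together with the matching of closed points of $\TT$ (and their residual gerbes) with closed points of the atlas $V$; once that compatibility is established, every Nakayama and flatness step is routine.
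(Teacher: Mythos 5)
Your proposal is correct and follows essentially the same route as the paper: both reduce to the scheme case by base-changing along \'etale (smooth) atlases of $\TT$ and of $\SS\times_\TT T$, observing that being a sheaf flat over the base can be checked after such faithfully flat covers and that the fibre restrictions are compatible with this base change, and then invoke the scheme-theoretic statement of \cite[Lemma 4.3]{bridgeland-triangulated-99}. The only difference is that you also write out the proof of that cited lemma (top cohomology plus Nakayama, then the local criterion of flatness via the truncation triangle), whereas the paper simply references it.
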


\begin{proof}
  We remark that \(\mathcal{Q}\) is a sheaf, flat over \(\mathcal{T}\), if and
  only if the base change to an \'etale cover on the source and on the target is
  a sheaf, flat over the base. Using this, we can deduce the assertion of our lemma from \cite[Lemma
  4.3]{bridgeland-triangulated-99} as follows. 

  Specifically, pick an \'etale cover \(p_T\colon T\to
  \mathcal{T}\) by a scheme $T$. Then the morphism \(t\colon \mathrm{Spec}(k)\to \mathcal{T}\)
  lifts to \(t'\colon\mathrm{Spec}(k)\to T\).\footnote{We are using that \(k\)
  is algebraically closed here.} Set \(\mathcal{S}_T =
  \mathcal{S}\times_\mathcal{T}T\) and so for any \(t\in\mathcal{T}(k)\), we can
  set \(\mathcal{S}_t \cong \mathcal{S}_T\times_T\mathrm{Spec}(k)\). Let
  \(p_S\colon S_T\to \mathcal{S}_T\) be an \'etale cover of \(\mathcal{S}_T\) and
  \(S_t=S_T\times_T\mathrm{Spec}(k)\). We have the following diagram where
  all squares are Cartesian
  \[
    \begin{tikzcd}
      S_t\ar{d}\ar{r} & S_T\ar{d}{p_S} & {} \\
      \mathcal{S}_t\ar{r}{s}\ar{d}{\pi''} &
      \mathcal{S}_T\ar{r}{p_T'}\ar{d}{\pi'} & \mathcal{S}\ar{d}{\pi} \\
      \mathrm{Spec}(k)\ar{r}{t'} & T\ar{r}{p_T} & \mathcal{T}
    \end{tikzcd}
  \]
  Thus, \(\mathcal{Q}\) is a sheaf, flat over \(\mathcal{T}\) if and only if
  \(\mathcal{Q}' = (p_T'\circ p_S)^\ast\mathcal{Q}\) is a sheaf, flat over
  \(T\). The statement now follows from \textit{loc. cit.}
\end{proof}


\begin{lemma}
  Let $x\in \XX(k)$ be a point, and
  \(\mathcal{F}\in \mathcal{D(X)}\). Suppose
  \[
    \mathrm{Hom}(\mathcal{F},\mathcal{O}_{y,\eta}[i]) = 0
  \]
  for \(i\in\mathbb{Z}\), all points $y\neq x$, and all $\eta\in  \mathrm{Irr}(\mathrm{Aut}(y))$, and 
  \[
    \mathrm{Hom}(\mathcal{F},\mathcal{O}_{x,\xi}[i]) = 0
  \]
  for \(i\notin[0,\dim(\mathcal{X})]\) and all $\xi\in  \mathrm{Irr}(\mathrm{Aut}(x))$.

  Then \(\mathcal{F}\) is a sheaf supported at $x$. 
  \label{lem:sheaf-lemma}
\end{lemma}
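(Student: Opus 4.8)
The plan is to follow the scheme-theoretic argument of \cite[Sec.\ 7.1]{huybrechts-fm}, splitting the statement into two parts: first that every cohomology sheaf $\mathcal{H}^i := \mathcal{H}^i(\mathcal{F})$ is supported at $x$, and then that $\mathcal{F}$ is concentrated in a single cohomological degree. The main tool is the spectral sequence $E_2^{p,q} = \Hom(\mathcal{H}^{-q}, \mathcal{O}_{y,\eta}[p]) \Rightarrow \Hom(\mathcal{F}, \mathcal{O}_{y,\eta}[p+q])$ used in Proposition \ref{prop:spanning-pts}, together with the decomposition \eqref{eqn:sod-bg} of sheaves on a residual gerbe.

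For the support statement, let $m$ be the largest integer with $\mathcal{H}^m \neq 0$ and pick any $y$ in its support. Exactly as in the proof of Proposition \ref{prop:spanning-pts}, the differentials entering and leaving $E_r^{0,-m}$ vanish for $r \geq 2$ (the incoming ones are negative $\Ext$'s of sheaves, the outgoing ones land in groups out of the vanishing $\mathcal{H}^{>m}$), so $E_\infty^{0,-m} = \Hom(\mathcal{H}^m, \mathcal{O}_{y,\eta})$. Restricting $\mathcal{H}^m$ to the residual gerbe $\iota_y\colon B\Aut(y) \to \mathcal{X}$ gives a nonzero sheaf, which by \eqref{eqn:sod-bg} surjects onto some $\mathcal{O}_{\Spec(k)} \otimes \eta$; hence there is a nonzero map $\mathcal{H}^m \to \mathcal{O}_{y,\eta}$ and therefore $\Hom(\mathcal{F}, \mathcal{O}_{y,\eta}[-m]) \neq 0$. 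By hypothesis this forces $y = x$, so $\supp \mathcal{H}^m = \{x\}$. To propagate this downward I will pass to the truncation triangle $\tau_{\leq m-1}\mathcal{F} \to \mathcal{F} \to \mathcal{H}^m[-m] \to$; since $\mathcal{H}^m$ is now supported at $x$, the groups $\Hom(\mathcal{H}^m[-m], \mathcal{O}_{y,\eta}[i])$ vanish for every $y \neq x$ by disjointness of supports, so $\tau_{\leq m-1}\mathcal{F}$ inherits the vanishing away from $x$. Repeating the argument with its top cohomology $\mathcal{H}^{m-1}$ and inducting downward (there are finitely many nonzero $\mathcal{H}^i$) shows every $\mathcal{H}^i$ is supported at $x$.

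It remains to pin down the degree. Write $m = \max\{i : \mathcal{H}^i \neq 0\}$ and $n = \min\{i : \mathcal{H}^i \neq 0\}$. The computation above, now with $y = x$, gives $\Hom(\mathcal{F}, \mathcal{O}_{x,\xi}[-m]) \neq 0$ for a suitable $\xi$, so the range hypothesis forces $-m \in [0, \dim(\mathcal{X})]$, i.e.\ $m \leq 0$. For the lower bound I will run the dual spectral sequence $E_2^{p,q} = \Ext^p(\mathcal{O}_{x,\xi}, \mathcal{H}^q) \Rightarrow \Hom(\mathcal{O}_{x,\xi}, \mathcal{F}[p+q])$, whose corner $E_\infty^{0,n} = \Hom(\mathcal{O}_{x,\xi}, \mathcal{H}^n)$ survives for the same reason. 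Since $\mathcal{H}^n$ is a nonzero coherent sheaf supported at $x$, a minimal power of the ideal of the residual gerbe produces a nonzero subsheaf annihilated by that ideal, i.e.\ a nonzero sheaf on $B\Aut(x)$; by \eqref{eqn:sod-bg} it contains a copy of some $\mathcal{O}_{x,\xi}$, giving an injection $\mathcal{O}_{x,\xi} \hookrightarrow \mathcal{H}^n$ and hence $\Hom(\mathcal{O}_{x,\xi}, \mathcal{F}[n]) \neq 0$. Applying Serre duality, and using that $\mathcal{O}_{x,\xi} \otimes \omega_{\mathcal{X}} \cong \mathcal{O}_{x,\xi'}$ for the irreducible $\xi' = \xi \otimes (\omega_{\mathcal{X}}|_x)$ by the projection formula, converts this into $\Hom(\mathcal{F}, \mathcal{O}_{x,\xi'}[\dim(\mathcal{X}) - n]) \neq 0$, whence $\dim(\mathcal{X}) - n \in [0, \dim(\mathcal{X})]$ and $n \geq 0$. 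As $n \leq m$ always holds, we obtain $0 \leq n \leq m \leq 0$, so $\mathcal{F} \cong \mathcal{H}^0(\mathcal{F})$ is a sheaf supported at $x$.

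The steps that are genuinely stacky, and where I expect the only real work, are the two places where the residual gerbe intervenes: producing a nonzero map $\mathcal{H}^m \to \mathcal{O}_{x,\xi}$ and a nonzero injection $\mathcal{O}_{x,\xi} \hookrightarrow \mathcal{H}^n$ out of the decomposition \eqref{eqn:sod-bg}, and verifying that the Serre functor keeps us inside $\Omega_{pt}$ by twisting $\xi$ by the character $\omega_{\mathcal{X}}|_x$. The vanishing of the extremal spectral-sequence differentials and the truncation bookkeeping are routine, exactly as in the scheme case.
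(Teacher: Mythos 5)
Your proof is correct, but it takes a genuinely different route from the paper. The paper's own argument is a two-line reduction: it passes to an \'etale atlas $\pi\colon U\to\mathcal{X}$ by a scheme, observes that $\mathcal{F}$ is a sheaf supported at $x$ iff $\pi^\ast\mathcal{F}$ is a sheaf supported on $\pi^{-1}(x)$, and then invokes the scheme-level argument of \cite[Lemma 7.2]{huybrechts-fm} for $\pi^\ast\mathcal{F}$; the (implicit) point making this legitimate is that $\Hom_U(\pi^\ast\mathcal{F},\mathcal{O}_u[i])\cong\Hom_{\mathcal{X}}(\mathcal{F},\pi_\ast\mathcal{O}_u[i])$ and $\pi_\ast\mathcal{O}_u$ decomposes as a direct sum of sheaves $\mathcal{O}_{y,\eta}$, so the hypotheses transfer to the cover. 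You instead work intrinsically on the stack, rerunning Huybrechts' two spectral-sequence arguments with generalized points: Maschke's decomposition on the residual gerbe supplies the nonzero surjection $\mathcal{H}^m\to\mathcal{O}_{y,\eta}$ and the injection $\mathcal{O}_{x,\xi}\hookrightarrow\mathcal{H}^n$, and you replace Huybrechts' local computation $\Ext^{\dim X}(\mathcal{G},k(x))\neq 0$ by global Serre duality together with the observation that $\mathcal{O}_{x,\xi}\otimes\omega_{\mathcal{X}}\cong\mathcal{O}_{x,\xi\otimes(\omega_{\mathcal{X}}|_x)}$ stays in the spanning class. Your version is longer but self-contained and makes explicit exactly where the stacky structure enters; the paper's version is shorter but leaves the transfer of hypotheses along the atlas to the reader. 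All the individual steps you use (vanishing of the extremal differentials, disjoint-support vanishing of $\Ext$'s for the truncation induction, irreducibility of the twist of $\xi$ by the character $\omega_{\mathcal{X}}|_x$) check out.
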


\begin{proof}
  Let \(\pi\colon U\to \mathcal{X}\) be an \'etale cover by a scheme $U$. If
  \(\pi^\ast\mathcal{F}\) is a sheaf concentrated at \(\pi^{-1}(x)\), then
  \(\mathcal{F}\) must also be a sheaf concentrated at \(x\). But by the
  argument in \cite[Lemma 7.2]{huybrechts-fm} and our assumptions, the object
  \(\pi^\ast\mathcal{F}\) is a sheaf concentrated at \(\pi^{-1}(x)\).
\end{proof}

\section{Ineffective group actions and twisted sheaves}
\label{sec:bgll}

We will use the following description of the derived category \(\mathcal{D}[X/G]\) of a global quotient 
by a non-effective action of a finite group from \cite[Theorem 5.5(i)]{bgll-17} in terms of
sheaves twisted by a Brauer class. 

\subsection{BGLL Equivalence with twisted sheaves}

Suppose \(G\) is a finite group and \(X\) is a smooth quasi-projective
\(G\)-variety. Let us denote by \(N\sub G\) the kernel of the action so that \(H = G/N\) acts
effectively on \(X\). In \cite{bgll-17}, the authors describe the category
\(\mathfrak{Coh}[X/G]\) in terms of twisted \(H\)-equivariant sheaves on
\(\mathrm{Irr}(N)\times X\). We recall this now.

Let \(V\) be any representation of \(G\) and consider the algebra
\[
A: = \mathrm{End}_N(V)^{op}
\]
We will assume that \(V\) is an
\(N\)-generator, i.e., \(V\) contains all irreducible representations of \(N\).
For example, \(V = k[G]\) would work.

Let \(Z\) be the center of the group algebra of \(N\),
\[
  Z := Z(k[N]),
\]
and let
\[
  \mathrm{Irr}(N):= \Spec(Z)
\]
denote the scheme of irreducible representations (discrete under our assumptions). 
The group \(H\) acts naturally on
\(\mathrm{Irr}(N)\), and \(A\) is an \(H\)-equivariant
Azumaya algebra over \(\mathrm{Irr}(N)\) (via the natural embedding $Z\to A$). Hence, \(A\) determines an \(H\)-equivariant Brauer
class \(\alpha\in \mathrm{Br}^H(\mathrm{Irr}(N))\). 
More precisely, if 
$$V=\bigoplus_{\xi\in \mathrm{Irr}(N)}V_\xi \ot \xi,$$
is a decomposition of $V$ into $N$-isotypic components, then
\begin{equation}\label{A-dec-eq}
A=\bigoplus_{\xi\in \mathrm{Irr}(N)} \mathrm{End}_k(V_\xi)^{op}.
\end{equation}

We equip \(\mathrm{Irr}(N)\times X\) with the diagonal \(H\)-action, and denote by
\(\pi_1\colon \mathrm{Irr}(N)\times X\to \mathrm{Irr}(N)\) and \(\pi_2\colon
\mathrm{Irr}(N)\times X\to X\) 
the natural ($H$-equivariant) projections. Let us consider the sheaf of algebras  
\[\mathcal{A} := A\otimes_k\mathcal{O}_X\simeq \pi_{2\ast}(\pi_1^\ast A)\]
on $X$, equipped with an $H$-equivariant structure.

Since \(\pi_2\) is a
finite morphism, we have an equivalence of categories
\[
  \pi_{2\ast}\colon
   \mathfrak{Coh}^H(\mathrm{Irr}(N)\times X,\pi_1^\ast \alpha)=
  \mathfrak{Coh}^H(\mathrm{Irr}(N)\times X,\pi_1^\ast A)\xrightarrow{\sim}
  \mathfrak{Coh}^H(X,\AA).
\]

Set
\(\mathcal{V}:=V\otimes_k\mathcal{O}_X\) and consider the functor
\begin{equation}\label{Hom-V-functor}
  \mathcal{H}om_N(\mathcal{V},-)\colon\mathfrak{Coh}^G(X)\to
  \mathfrak{Coh}^H(X,\mathcal{A}).
\end{equation}

\begin{theorem}[BGLL Equivalence]
  There is an equivalence of categories
  \[
    \mathfrak{Coh}^G(X)\simeq \mathfrak{Coh}^H(\mathrm{Irr}(N)\times X, \pi_1^\ast\alpha).
  \]
  given by \(\pi_{2\ast}^{-1}\circ\mathcal{H}om_N(\mathcal{V},-)\).
  \label{thm:bgll}
\end{theorem}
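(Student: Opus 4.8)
The plan is to establish the BGLL equivalence by verifying that the composite functor $\pi_{2*}^{-1}\circ \mathcal{H}om_N(\mathcal{V},-)$ is fully faithful and essentially surjective. I would begin by unwinding what the functor $\mathcal{H}om_N(\mathcal{V},-)$ does on objects. Given a $G$-equivariant coherent sheaf $\mathcal{F}$ on $X$, the sheaf $\mathcal{H}om_N(\mathcal{V},\mathcal{F})$ is a sheaf of right $\mathcal{A} = \operatorname{End}_N(\mathcal{V})^{op}$-modules, and carries a residual $H = G/N$-equivariant structure precisely because the $N$-action has been ``absorbed'' into the $\mathcal{A}$-module structure. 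The first technical point is to check this $H$-equivariant structure is well defined; this is a formal consequence of the fact that $H = G/N$ and that $N$ acts through $\mathcal{A}$.

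The main step is to prove full faithfulness and essential surjectivity of $\mathcal{H}om_N(\mathcal{V},-)$ landing in $\mathfrak{Coh}^H(X,\mathcal{A})$. Here I would exploit that $V$ is an $N$-generator, so $\mathcal{V} = V\otimes_k \mathcal{O}_X$ is a projective generator for the category of $N$-equivariant $\mathcal{O}_X$-modules fibrewise. By the classical Morita-theoretic principle, taking $\mathcal{H}om_N(\mathcal{V},-)$ then gives an equivalence between $N$-equivariant sheaves and modules over the endomorphism algebra $\mathcal{A}$; the quasi-inverse is $-\otimes_{\mathcal{A}}\mathcal{V}$. One must then track the extra $H$-equivariance through this Morita equivalence, upgrading it to an equivalence $\mathfrak{Coh}^G(X)\simeq \mathfrak{Coh}^H(X,\mathcal{A})$. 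The key computation is that the unit and counit of the adjunction $(-\otimes_{\mathcal{A}}\mathcal{V}, \mathcal{H}om_N(\mathcal{V},-))$ are isomorphisms, which reduces to checking locally where $X$ is affine and $\mathcal{V}$ is literally $V\otimes_k \mathcal{O}_X$.

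With the equivalence $\mathfrak{Coh}^G(X)\simeq \mathfrak{Coh}^H(X,\mathcal{A})$ in hand, I would compose with the equivalence $\pi_{2*}\colon \mathfrak{Coh}^H(\mathrm{Irr}(N)\times X,\pi_1^*\alpha)\xrightarrow{\sim}\mathfrak{Coh}^H(X,\mathcal{A})$ already supplied in the excerpt, which holds because $\pi_2$ is finite and the decomposition \eqref{A-dec-eq} identifies $\mathcal{A}$-modules supported at a point $\xi\in\mathrm{Irr}(N)$ with $\alpha$-twisted sheaves there. Inverting this and precomposing yields the desired functor $\pi_{2*}^{-1}\circ\mathcal{H}om_N(\mathcal{V},-)$, establishing the theorem.

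The hard part will be the careful bookkeeping of the $H$-equivariant structures under the Morita equivalence: while the ungraded Morita statement is standard, verifying that the $H$-action is transported coherently (so that $H$-equivariant $\mathcal{A}$-modules correspond exactly to $G$-equivariant sheaves, with no lost or spurious equivariance data) requires untangling how the conjugation action of $G$ on $\operatorname{End}_N(V)$ descends to $H$ acting on $\mathrm{Irr}(N)$. A secondary subtlety is the interaction between the twist $\pi_1^*\alpha$ and the $H$-action when $H$ permutes the points of $\mathrm{Irr}(N)$ nontrivially, which is exactly where the Azumaya/Brauer class $\alpha$ enters and where the equivalence genuinely depends on the ineffectivity of the original $G$-action.
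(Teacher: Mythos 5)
The paper offers no proof of this statement at all: Theorem \ref{thm:bgll} is imported verbatim from \cite[Theorem 5.5(i)]{bgll-17}, so there is no internal argument to compare yours against. That said, your Morita-theoretic outline is essentially the argument of the cited source, and it is correct as a sketch. Since $N$ acts trivially on $X$, a $G$-equivariant sheaf $\mathcal{F}$ is the same thing as a sheaf of $k[N]\otimes_k\mathcal{O}_X$-modules equipped with a compatible $G$-semilinear structure; because $k[N]$ is semisimple in characteristic zero and $V$ contains every irreducible of $N$, the bundle $\mathcal{V}=V\otimes_k\mathcal{O}_X$ is a projective generator fibrewise, so $\mathcal{H}om_N(\mathcal{V},-)$ with quasi-inverse $\mathcal{V}\otimes_{\mathcal{A}}-$ is a Morita equivalence onto $\mathcal{A}$-modules, exactly as you say; composing with the finite pushforward $\pi_{2*}$ then gives the stated equivalence.

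Two remarks on the points you single out as delicate. First, the equivariance bookkeeping you call the hard part is in fact painless: conjugation by $n\in N$ is the identity on $A=\operatorname{End}_N(V)^{op}$, because $\operatorname{End}_N(V)$ is by definition the commutant of $N$ in $\operatorname{End}(V)$, and it is likewise the identity on $\operatorname{Hom}_N(V,\mathcal{F})$ (for $\phi$ that intertwines the $N$-actions, $n_{\mathcal{F}}\circ\phi\circ n_V^{-1}=\phi$). So $A$ is honestly an $H$-equivariant algebra and the $G$-action on $\mathcal{H}om_N(\mathcal{V},\mathcal{F})$ honestly factors through $H$; no Brauer-theoretic ambiguity intervenes at this stage. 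Second, the one step that genuinely needs a check --- and which your proposal gestures at with ``no lost or spurious equivariance data'' --- is that the fibrewise $N$-action on $\mathcal{F}$ is fully recovered from the $\mathcal{A}$-module structure, i.e.\ that the counit $\mathcal{V}\otimes_{\mathcal{A}}\mathcal{H}om_N(\mathcal{V},\mathcal{F})\to\mathcal{F}$ is an isomorphism of $G$-sheaves, not merely of $\mathcal{O}_X$-modules. This reduces, as you indicate, to the affine/fibrewise case, where it is classical Morita theory. So your outline has no genuine gap; it simply reproves the quoted input rather than paralleling anything the paper itself does.
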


Let us consider the stack quotient 
\[\bar{X}_N = [(\mathrm{Irr}(N)\times X)/H]\]
which has trivial generic automorphism group.
The $H$-equivariant class \(\pi_1^\ast\alpha\) defines an element \(\bar{\alpha}\) in the Brauer group
\(\mathrm{Br}(\bar{X}_N)\), so we can rewrite the above equivalence as 
\[\mathfrak{Coh}^G(X)\simeq \mathfrak{Coh}(\bar{X}_N,\bar{\alpha}).\]  
We can represent $\bar{\alpha}$ by the Azumaya algebra $\mathcal{A}'$, the descent of $\pi_1^\ast A$ to $\bar{X}_N$. 
Note that the push-forward of $\mathcal{A}'$ under the projection $\pi_{2*}:\bar{X}_N\to [X/H]$ is precisely $\mathcal{A}$ viewed
as a sheaf of algebras on $[X/H]$.
Note also that if we consider $\VV$ as a vector bundle on the stack $[X/G]$ then the functor \eqref{Hom-V-functor}
can be identified with the functor
$$\mathfrak{Coh}([X/G])\to\mathfrak{Coh}([X/H],\mathcal{A})\colon\FF\mapsto \pi_*(\mathcal{H}om_{[X/G]}(\VV,\FF)),$$
where $\pi:[X/G]\to [X/H]$ is the natural morphism.

\subsection{BGLL Equivalence and generalized points}

Let us assume in addition that $H$ acts freely on $X$, so that $\bar{X}_N$ is a usual scheme (not a stack).

For each generalized point \( (x,\xi)\) of $[X/G]$, we set \(\overline{(x,\xi)}\) to be the image of $(x,\xi)$ 
in \(\bar{X}_N\) which is a \(k\)-point of $\bar{X}_N$. The
corresponding skyscraper sheaf \(\mathcal{O}_{\overline{(\xi,x)}}\) can be viewed as an 
\(\bar{\alpha}\)-twisted sheaf on \(\bar{X}_N\). More precisely, we use the natural splitting of $A$ over $\xi$
(see decomposition \eqref{A-dec-eq}),
so the corresponding sheaf of $\mathcal{A}'$-modules is $V_\xi^{\vee}\otimes\mathcal{O}_{\overline{(\xi,x)}}$.

\begin{lemma}\label{BGLL-points-lem}
    Under the BGLL equivalence above, the structure sheaves of generalized points
    \(\mathcal{O}_{x,\xi}\) are mapped to the skyscraper sheaves
    \(\mathcal{O}_{\overline{(\xi,x)}}\) viewed as twisted sheaves.
\end{lemma}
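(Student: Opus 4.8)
The plan is to trace the structure sheaf $\OO_{x,\xi}$ of a generalized point through the two functors composing the BGLL equivalence, namely $\sHom_N(\VV,-)$ followed by $\pi_{2*}^{-1}$, and to identify the result with the $\bar\alpha$-twisted skyscraper sheaf as described. The key reduction is that everything is local over the image point $\bar x$ of $x$ in the effective quotient $[X/H]$; since $H$ acts freely, $\bar x$ is an honest $k$-point of $[X/H]$, and the generalized points lying over it are exactly the pairs $(x,\xi)$ with $\xi \in \Irr(N)$ (after fixing the $H$-orbit of $x$). So I would first pass to the fiber over $\bar x$ and compute there.

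The first substantive step is to understand $\OO_{x,\xi}$ as a $G$-equivariant sheaf on $X$. By definition $\OO_{x,\xi} = \iota_{x*}(\OO_{\Spec(k)}\otimes \xi)$, but here one must be careful about what $\xi$ means: in Theorem \ref{thm:bgll} the points of $\Irr(N)$ index irreducible representations of the kernel $N$, whereas a generalized point of $[X/G]$ involves an irreducible representation of the full stabilizer $\Aut(x)$. Since $H$ acts freely on $X$, the stabilizer of $x$ in $G$ is exactly $N$, so $\Aut(x)=N$ and $\xi \in \Irr(N)$ genuinely; this is the point where the freeness hypothesis does the essential work, and I would state it explicitly. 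Then $\OO_{x,\xi}$, as a $G$-equivariant sheaf, is the skyscraper at the $H$-orbit of $x$ whose fiber carries the irreducible $N$-representation $\xi$ (induced up appropriately over the orbit).

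Next I would apply $\sHom_N(\VV,-)$. Using the isotypic decomposition $V=\bigoplus_{\xi'} V_{\xi'}\otimes \xi'$ from \eqref{A-dec-eq}, the sheaf $\sHom_N(\VV, \OO_{x,\xi})$ is supported at $x$, and on the fiber the $N$-equivariant $\Hom$ kills every isotypic component except the one indexed by $\xi$, leaving $\Hom_N(V_\xi\otimes\xi,\,\xi) = V_\xi^\vee$. Thus the image is $V_\xi^\vee$ placed at $x$, as a module over $\AA=\bigoplus_{\xi'}\End_k(V_{\xi'})^{op}$ on which only the $\End_k(V_\xi)^{op}$-factor acts nontrivially. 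This matches precisely the prescription given just before the lemma: the $\mathcal{A}'$-module attached to $\overline{(x,\xi)}$ is $V_\xi^\vee \otimes \OO_{\overline{(\xi,x)}}$, with the splitting of $A$ over the point $\xi \in \Irr(N)$ used to view it as an $\bar\alpha$-twisted sheaf.

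Finally I would invert $\pi_{2*}$: since $\pi_2$ is finite and $\pi_{2*}$ is the stated equivalence, an $\AA$-module on $X$ supported at $x$ and concentrated over the single point $\xi\in\Irr(N)$ is the pushforward of a (twisted) sheaf on $\Irr(N)\times X$ supported at the single point $(\xi,x)$, which descends to the skyscraper $\OO_{\overline{(\xi,x)}}$ on $\bar X_N$. I expect the main obstacle to be bookkeeping the equivariant/twisting data carefully enough to confirm that the resulting sheaf carries exactly the twist $\bar\alpha$ (equivalently, is a module over the splitting $\mathcal{A}'$ of the form $V_\xi^\vee \otimes \OO_{\overline{(\xi,x)}}$) rather than merely matching the underlying untwisted skyscraper; making the identification of the $H$-equivariant structure and the Azumaya-algebra action compatible across $\pi_{2*}^{-1}$ is the delicate part, and I would handle it by working étale-locally over $\bar x$ where the Brauer class splits and the computation reduces to the explicit isotypic bookkeeping above.
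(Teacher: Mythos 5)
Your proposal is correct and follows essentially the same route as the paper: both compute $\sHom_N(\VV,\OO_{x,\xi})$ via the isotypic decomposition of $V$ to obtain $V_\xi^\vee$ supported at $\ov{x}$ as an $\AA$-module through the projection $A\to\End(V_\xi)^{op}$, and then match this with the push-forward under $\pi_{2*}$ of the twisted skyscraper $V_\xi^\vee\otimes\OO_{\overline{(\xi,x)}}$. Your explicit remark that freeness of the $H$-action forces $\Aut(x)=N$ (so that $\xi\in\Irr(N)$) is a point the paper leaves implicit, but it is the same argument.
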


\begin{proof}
  Recall that for a generalized point $(x,\xi)$, one has \(\mathcal{O}_{x,\xi} = \iota_{x\ast}(\xi)\). We have a commutative diagram
\[  
    \begin{tikzcd}
      \mathrm{pt}/N\ar{d}\ar{r}{\iota_x} & {[X/G]}\ar{d}{\pi} \\
      \mathrm{pt} \ar{r}{\iota_{\overline{x}}} & {[X/H]}
    \end{tikzcd}
\]
where $\overline{x}$ is the induced point of $X/H$.
  Thus,
  \[
    \pi_*\mathcal{H}om_{[X/G]}(\mathcal{V},\mathcal{O}_{x,\xi}) \cong
    \pi_*\iota_{x\ast}\mathrm{Hom}(V,\xi)\cong 
    \iota_{\ov{x}\ast}V_\xi^\vee,
  \]
which is viewed as an $\mathcal{A}$-module via the projection $A\to \End(V_\xi)^{op}$.
On the other hand, as we have seen above, the structure sheaf of the point $\overline{(x,\xi)}\in \bar{X}_N$
corresponds to the $\mathcal{A}'$-module $V_\xi^{\vee}\otimes\mathcal{O}_{\overline{(\xi,x)}}$.
Thus, its push-forward under $\pi_{2*}$ is isomorphic to the $\mathcal{A}$-module $\iota_{\ov{x}\ast}V_\xi^\vee$.
\end{proof}

\subsection{BGLL Equivalence and Fourier-Mukai Functors}

Let \(\QQ\) be a \(G\times G\)-equivariant sheaf on \(X\times X\). Then \(\QQ\)
determines, under the BGLL equivalence, a twisted sheaf \(\QQ'\) on
\(\bar{X}_N\times \bar{X}_N\).

\begin{lemma}
  Suppose \(\QQ\) is flat over \(X\) via the first projection, then \(\QQ'\) is
  flat over \(\bar{X}_N\) over the first projection.
  \label{lem:flatness-bgll}
\end{lemma}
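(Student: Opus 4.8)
The plan is to reduce the flatness of $\QQ'$ over $\bar{X}_N$ to the flatness of $\QQ$ over $X$ by unwinding the BGLL equivalence explicitly and then applying the fiberwise flatness criterion of Lemma \ref{lem:flatness-criterion}. The BGLL correspondence in Theorem \ref{thm:bgll} is built fiberwise in the $X$-direction: the functor $\mathcal{H}om_N(\mathcal{V},-)$ is $\OO_X$-linear, and $\pi_{2*}$ is an equivalence induced by a finite morphism, so passing from $\QQ$ to $\QQ'$ commutes with restriction to fibers of the relevant projections. Applied to $X\times X$ with the $G\times G$-action, the equivalence turns $\QQ$ into the twisted sheaf $\QQ'$ on $\bar{X}_N\times\bar{X}_N$, and I would track precisely how the first-projection structure is preserved under this passage.

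First I would set up the two product stacks and the projections. On the source side we have $\QQ\in\mathfrak{Coh}^{G\times G}(X\times X)$ flat over $X$ via $\pi_1$. On the target side $\QQ'$ is an $\bar\alpha\boxtimes\bar\alpha$-twisted sheaf on $\bar{X}_N\times\bar{X}_N$, and I want flatness over the first factor $\bar{X}_N$. Since flatness of a twisted sheaf can be checked after passing to the underlying Azumaya-module picture and then to an \'etale cover (as in the proof of Lemma \ref{lem:flatness-criterion}), and since twisting by a Brauer class is \'etale-locally trivial, I may verify flatness after forgetting the twist, i.e. for the corresponding $\mathcal{A}'\boxtimes\mathcal{A}'$-module, and indeed \'etale-locally on $\bar{X}_N\times\bar{X}_N$ where it becomes an ordinary sheaf. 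The key structural point is that the BGLL functor in the $X\times X$ setting factors through a single $\mathcal{H}om_N$ applied diagonally combined with the finite pushforward $\pi_{2*}\times\pi_{2*}$, and that $\mathrm{Irr}(N)\times X$ is finite and flat over $X$; hence the transform introduces no relative non-flatness over the first projection beyond what is already present in $\QQ$.

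The cleanest route is via Lemma \ref{lem:flatness-criterion}: it suffices to show that for every closed point $\bar{t}$ of $\bar{X}_N$ the derived restriction of $\QQ'$ to the fiber over $\bar t$ is a sheaf. I would lift $\bar t$ to a generalized point $(t,\zeta)$ of $[X/G]$, so that $\OO_{\bar t}$ corresponds under BGLL to $\OO_{t,\zeta}$ by Lemma \ref{BGLL-points-lem}. Because the BGLL equivalence is compatible with the projections to the first factor — the diagram relating $\pi$ on the quotient stacks to $\pi_{2*}$ and the identification of skyscrapers with structure sheaves of generalized points — the derived fiber $Lj_{\bar t}^*\QQ'$ is computed by the BGLL transform (in the second $X$-factor only) of the derived fiber $Lj_{(t,\zeta)}^*\QQ$ of the original kernel. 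The hypothesis that $\QQ$ is flat over $X$ via $\pi_1$ guarantees, after base change to the residual gerbe at $t$, that this derived fiber is again a sheaf; applying the (exact, fiberwise) BGLL equivalence in the remaining variable then yields that $Lj_{\bar t}^*\QQ'$ is a sheaf as well.

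The main obstacle I anticipate is the bookkeeping of the equivariant and twisted structures when restricting to fibers: I must be careful that ``derived restriction to the fiber over $\bar t$'' on the twisted side corresponds exactly to derived restriction over $t$ on the equivariant side, and that the center $Z(k[N])$ and the Azumaya structure do not obstruct this identification. Concretely, the subtlety is that the fiber of $\bar{X}_N\to\mathrm{Irr}(N)$-data interacts with the group $H$ and the splitting \eqref{A-dec-eq}, so I need to confirm that restricting $\QQ'$ over $\bar t\in\bar{X}_N$ agrees with first restricting $\QQ$ over the corresponding point of $X$ and then transforming. Once that compatibility of restriction with the BGLL transform is pinned down — using that $\mathcal{H}om_N(\mathcal{V},-)$ is $\OO_X$-linear and $\pi_{2*}$ is induced by a finite flat map over $X$ — the conclusion follows immediately from Lemma \ref{lem:flatness-criterion} applied on $\bar{X}_N$.
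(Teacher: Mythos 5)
Your argument is workable, but it takes a substantially longer route than the paper and the route is not needed. The paper's proof is a one-liner: the BGLL equivalence on $X\times X$ is a composition of functors each of which visibly preserves flatness over the first factor --- the only one requiring a remark is $\mathcal{H}om_N(\mathcal{V},-)$, and since $\mathcal{V}$ is a vector bundle this sends $\QQ$ to a direct summand of $(\mathcal{V}\boxtimes\mathcal{V})^\vee\otimes\QQ$, which is flat over $X$ whenever $\QQ$ is; the finite pushforward $\pi_{2*}$ and the \'etale descent to $\bar{X}_N\times\bar{X}_N$ then clearly preserve flatness over the base. You instead reduce to the derived-fiber criterion of Lemma \ref{lem:flatness-criterion} on $\bar{X}_N$ and transport fibers back through the equivalence. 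That can be made to work, but notice that the compatibility you flag as the ``main obstacle'' --- that derived restriction to the fiber over $\bar{t}=\overline{(\zeta,t)}$ commutes with the transform, more precisely that $Lj_{\bar t}^*\QQ'$ is the second-variable BGLL transform of the $\zeta$-isotypic summand of $Lj_t^*\QQ$ --- rests on exactly the same observation the paper uses ($\mathcal{V}$ is a vector bundle, so $\mathcal{H}om_N(\mathcal{V},-)$ is exact and $\OO_X$-linear in the first factor). Once you have that, you have already proved flatness directly, so the detour through fibers, Lemma \ref{BGLL-points-lem}, and \'etale trivializations of the Brauer class buys nothing. Two small cautions if you do pursue your version: Lemma \ref{BGLL-points-lem} identifies skyscrapers, which is not quite the same operation as derived restriction to the fiber of the first projection (the latter is a derived tensor in the first variable), and that lemma is proved under the additional assumption that $H$ acts freely, which is not part of the hypotheses of the flatness lemma itself.
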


\begin{proof}
  We just need to check that the functor \(\mathcal{H}om_N(\mathcal{V},-)\)
  preserves flatness as all of the other functors clearly do. But this is clear
  as \(\mathcal{V}\) is a vector bundle.
\end{proof}

The last ingredient is a generalization of Bridgeland's Hilbert scheme argument
(see the proof of \cite[Lem.\ 5.3]{bridgeland-triangulated-99}).

For a smooth quasiprojective scheme $S$, we denote by $\mathrm{Hilb}_{\ell}(S)$
 the Hilbert scheme of length $\ell$ finite subschemes.

Let \( (Y,\alpha)\) be a twisted smooth scheme and \(\pi\colon U\to Y\) an \'etale cover by a scheme $U$
trivializing \(\alpha\). 

\begin{lemma}
  Suppose \(\QQ\) is a coherent \(\pi_2^\ast\alpha\)-sheaf on \( U\times Y\), for
  \(\alpha\in\mathrm{Br}(Y)\), which is flat over \(U\). Suppose for each closed
  point \(u\in U\), the following two conditions hold:
  \begin{itemize}
    \item \(\QQ_u:= \QQ_{\{u\}\times Y}\) is concentrated at \(\pi(u)\);
    \item \(\mathrm{Hom}(\QQ_u,\mathcal{O}_{\pi(u)}) = k\).
  \end{itemize}
  Then there exists a non-empty open subsheme \(U'\) of \(U\) such that the corresponding
  composite map
  \[
    U'\to \mathcal{C}oh(Y,\alpha)\xrightarrow{\pi^\ast}\mathcal{C}oh(U)
  \]
  factors through a finite map \(f:U'\to \mathrm{Hilb}_\ell(U)\), for some $\ell\ge 0$, where \(\mathcal{C}oh(Y,\alpha)\) is the
  stack of coherent \( (Y,\alpha)\)-twisted sheaves. In other words, there is an isomorphism of coherent sheaves
  over $U'\times U$,
  $$(\id_{U'}\times\pi)^*\QQ\simeq (f\times\id_U)\mathcal{O}_{\Xi},$$
  where $\Xi\subset \mathrm{Hilb}_\ell(U)\times U$ is the universal family of length $\ell$ subschemes.
  \label{lem:bridgeland-hilbert-argument}
\end{lemma}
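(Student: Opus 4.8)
The plan is to pull everything back along the trivializing cover $\pi$ so as to replace the $\alpha$-twisted family by an honest family of finite subschemes of $U$, and then to invoke the universal property of the Hilbert scheme exactly as in Bridgeland. First I would untwist: pulling $\QQ$ back along $\id_U\times\pi\colon U\times U\to U\times Y$ yields $\widetilde\QQ:=(\id_U\times\pi)^*\QQ$ on $U\times U$. Since $\pi$ trivializes $\alpha$, the twist $\pi_2^*\alpha$ becomes trivial, so $\widetilde\QQ$ is an ordinary coherent sheaf, still flat over the first factor (flatness over the first $U$ is unaffected by the \'etale base change $\pi$ on the second factor). For a closed point $u\in U$ the fibre of $\widetilde\QQ$ over $u$ is $\pi^*(\QQ_u)$.

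Next I would identify the fibres as structure sheaves. Writing $p=\pi(u)$ and $\m=\m_{Y,p}$, the hypothesis computes a fibre dimension: $\Hom(\QQ_u,\OO_p)\cong(\QQ_u/\m\QQ_u)^\vee$, so $\Hom(\QQ_u,\OO_{\pi(u)})=k$ says precisely that $\QQ_u\otimes k(p)$ is one-dimensional. By Nakayama $\QQ_u$ is then a cyclic $\OO_{Y,p}$-module, hence $\QQ_u\cong\OO_{W_u}$ for a finite subscheme $W_u\subset Y$ supported at $p$ of some length $\ell_0$. As $\pi$ is \'etale, $\pi^*\OO_{W_u}=\OO_{Z_u}$ with $Z_u=\pi^{-1}(W_u)\subset U$ a finite subscheme, supported on the finite fibre $\pi^{-1}(p)$ and containing $u$. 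Flatness of $\QQ$ over $U$ makes $\ell_0=\length(\QQ_u)$ locally constant, and \'etaleness makes the local degree of $\pi$ locally constant; after replacing $U$ by a connected open $U'$ I may therefore assume $\ell:=\length(Z_u)=\ell_0\cdot\deg\pi$ is constant, so on $U'$ every fibre of $\widetilde\QQ$ is the structure sheaf of a length-$\ell$ subscheme of $U$.

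To land in $\mathrm{Hilb}_\ell(U)$ rather than merely in $\mathcal{C}oh(U)$ I must present $\widetilde\QQ$ as a flat family of quotients of $\OO$. Let $\sigma\colon U\to U\times Y$, $u\mapsto(u,\pi(u))$; then $\sigma^*\QQ$ is a line bundle $L$ on $U$, since its fibre at $u$ is the one-dimensional space $\QQ_u\otimes k(\pi(u))$. Shrinking $U'$ so that $L|_{U'}$ is trivial, a trivializing section is a fibrewise generator, which by Nakayama together with flatness yields a surjection $\OO_{U'\times U}\twoheadrightarrow\widetilde\QQ|_{U'\times U}$ whose kernel is flat over $U'$ with length-$\ell$ quotients. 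By the universal property of the Hilbert scheme this family is classified by a morphism $f\colon U'\to\mathrm{Hilb}_\ell(U)$ with
\[
(f\times\id_U)^*\OO_{\Xi}\;\cong\;\widetilde\QQ|_{U'\times U}\;=\;(\id_{U'}\times\pi)^*\QQ,
\]
which gives both the asserted factorization and the explicit isomorphism (note that the line-bundle ambiguity is exactly what is killed by passing to the open $U'$).

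The hard part will be the finiteness of $f$. Quasi-finiteness is immediate from the support condition: if $f(u)=[Z]$ then $Z=Z_u$ contains $u$ in its support, so $f^{-1}([Z])\subseteq\supp(Z)$ is finite. To upgrade this to finiteness I would verify properness by the valuative criterion: a family of length-$\ell$ subschemes over a discrete valuation ring is finite over the base, hence its support is proper over the base, so the defining point $u\in\supp(Z_u)$ extends across the special point; combined with quasi-finiteness this yields that $f$ is finite by Zariski's main theorem. The only genuinely delicate bookkeeping is to arrange the shrinking to $U'$ so that the extended section still lands in $U'$, and this is handled precisely as in the proof of \cite[Lem.\ 5.3]{bridgeland-triangulated-99}.
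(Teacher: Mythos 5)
Your overall strategy is the same as the paper's: untwist by pulling back along $\id_U\times\pi$, identify each fibre as the structure sheaf of a finite subscheme via $\Hom(\QQ_u,\OO_{\pi(u)})=k$ plus Nakayama (this is exactly the ``Bridgeland's original argument'' the paper invokes), and classify the resulting flat family of quotients of $\OO$ by a map to $\mathrm{Hilb}_\ell(U)$. The step where your argument has a genuine gap is the production of the surjection $\OO_{U'\times U}\twoheadrightarrow\widetilde\QQ|_{U'\times U}$. The line bundle $L=\sigma^*\QQ$ only records the value of a prospective generator at the single point $(u,\pi(u))$, i.e.\ along the diagonal component of the support of $\widetilde\QQ$ inside $U\times_Y U$. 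But the fibre $\widetilde\QQ_u=\pi^*\OO_{W_u}=\OO_{\pi^{-1}(W_u)}$ is supported on all of $\pi^{-1}(\pi(u))$, which contains several points as soon as $\deg\pi>1$ --- the case of interest, since $\pi$ must trivialize a nontrivial Brauer class. A section of $\widetilde\QQ$ whose restriction along $\sigma$ (equivalently, along the diagonal of $U\times U$) is nonvanishing can still vanish identically on the off-diagonal summands of $\widetilde\QQ$, and then it fails to generate $\OO_{Z_u}$, which is a product of local rings indexed by all of $\pi^{-1}(\pi(u))$; Nakayama does not rescue this, because the relevant ring is not local. (You also do not explain how a section of $L$ over $U'$ is lifted to a section of $\widetilde\QQ$ over $U'\times U$ in the first place.) The repair is what the paper does: take the tautological generator $1\in H^0(\widetilde\QQ_{u_0})=\OO_{Z_{u_0}}$ of one \emph{entire} fibre, lift it to a section of $\widetilde\QQ$ over a neighbourhood of $u_0$ (using that $\mathrm{pr}_{1*}\widetilde\QQ\otimes k(u_0)\to H^0(\widetilde\QQ_{u_0})$ is surjective, the support being finite over the base), and shrink to an open $U'$ on which it generates every fibre.

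On finiteness of $f$: your quasi-finiteness argument from $u\in\supp Z_u$ is correct, and your worry about the valuative criterion is well founded --- a map from a proper open subset $U'\subsetneq U$ has no reason to be proper, and in fact the paper's own proof never establishes finiteness of $f$ either. Only quasi-finiteness (hence generic injectivity of $df$ in characteristic zero) is used downstream, so this is a defect of the statement rather than a further defect of your argument; but you should not present the ``delicate bookkeeping'' as something that is handled in Bridgeland, since there the analogous map is defined on all of the relevant parameter space.
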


\begin{proof}
  Since \(\QQ_u\) is concentrated at \(\pi(u)\), its scheme-theoretic support is a
  zero-dimensional subscheme of \(Y\). 
 Since the restriction of the class $\alpha$ to the support of $\QQ_u$ is trivial, we can view \(\QQ_u\) as an honest sheaf. Bridgeland's
  original argument shows that \(\QQ_u\) is the structure sheaf of a
  zero-dimensional subscheme.

  Let \(\QQ'\) denote the induced family on \(U\times U\). That is, \(\QQ'\) is
  the pullback of \(\QQ\). Then for each \(u\in U\), \(\QQ'_u
  :=\QQ'|_{\{u\}\times U}\) is the structure sheaf of a zero-dimensional
  subscheme of \(U\). The local map \(\mathcal{O}_U\to
  \QQ'_u\) extends  to a section \(H^0(U\times U,\QQ')\) which is surjective over $U'\times U$,
for a smaller open set $U'\subset U$. 
  This means that $\QQ'|_{U'\times U}\simeq \mathcal{O}_{\mathcal{Z}}$ for some subscheme $\mathcal{Z}\subset U'\times U$
  which is flat and finite over $U'$.
  Then we have the commutative diagram:
  \[
  \begin{tikzcd}
    U'\ar{r}{\QQ} \ar{dr}{\QQ'} & Coh(Y,\alpha) \ar{r}{\pi^\ast}& Coh(U) \\
    {} & \mathrm{Hilb}_{\ell}(U)\ar{ur}
  \end{tikzcd}.
  \]
\end{proof}

\section{Proof of Theorem \ref{thm:bob-stacks}}
\label{sec:main-thm}

We will proceed similarly to \cite[Section 7.1]{huybrechts-fm}. We have
already shown  in Proposition
\ref{prop:spanning-pts} that generalized points $\OO_{x,\xi}$ are spanning. Thus, by Proposition \ref{spanning-ff-prop}, we just need to show that the natural homomorphisms
\begin{equation}
  \mathrm{Hom}_{\mathcal{D}(X)}(\mathcal{O}_{x,\xi},\mathcal{O}_{y,\zeta}[i])\to
  \mathrm{Hom}_{\mathcal{T}}(F(\mathcal{O}_{x,\xi}),F(\mathcal{O}_{y,\zeta})[i])
  \label{eqn:bijectivity}
\end{equation}
are isomorphisms for all generalized points
\(\mathcal{O}_{x,\xi},\mathcal{O}_{y,\zeta}\) and any integer
\(i\in\mathbb{Z}\). The proof will occupy the remainder of this section.

\subsection{Reduction to \(G(F(\mathcal{O}_{x,\xi}))\cong \mathcal{O}_{x,\xi}\)}

As in the original proof, to prove 
(\ref{eqn:bijectivity}), we have to show the bijectivity of the map
\[
  \mathrm{Hom}_{\mathcal{D}(X)}(\mathcal{O}_{x,\xi},\mathcal{O}_{y,\zeta}[i])\to
  \mathrm{Hom}_{\mathcal{T}}(GF(\mathcal{O}_{x,\xi}),\mathcal{O}_{y,\zeta}[i])
\]
induced by the adjunction morphism \(G\circ F\to
\mathrm{Id}_{\mathcal{D(X)}}\), where $G$ is the left adjoint functor to $F$.

If \(GF(\mathcal{O}_{x,\xi})\cong \mathcal{O}_{x,\xi}\), then either the
adjunction morphism is zero or it is an isomorphism. But as in the original
proof, it cannot be zero as
\[
  \mathrm{Hom}_\mathcal{T}(F(\mathcal{O}_{x,\xi}),F(\mathcal{O}_{x,\xi})) = k.
\]
Thus, if we prove that \(GF(\mathcal{O}_{x,\xi})\cong \mathcal{O}_{x,\xi}\) then
we can deduce that \eqref{eqn:bijectivity} is bijective.

\subsection{Reduction to injectivity of \eqref{eqn:bijectivity} for \(i=1\).} 
\label{red-inj-i1-sec}

Fix a generalized point \(\mathcal{O}_{x,\xi}\) and suppose that
the homomorphism in (\ref{eqn:bijectivity}) is injective for \(i=1\).

By Lemma \ref{lem:sheaf-lemma}, \(\QQ_{x,\xi}:=G(F(\mathcal{O}_{x,\xi}))\) is a
sheaf supported at $x$. Since the adjunction map is not
trivial, there is a surjection \(\delta\colon \QQ_{x,\xi}\to
\mathcal{O}_{x,\xi}\).  Indeed, it is not zero and \(\xi\) is irreducible, so it
is surjective. We need to show that \(\delta\) is bijective. There is a short exact
sequence
\[
  0\to \mathrm{Ker}(\delta)\to
  \QQ_{x,\xi}\xrightarrow{\delta}\mathcal{O}_{x,\xi}\to 0
\]
where \(\mathrm{Ker}(\delta)\) is supported at $x$ as well.

To see \(\mathrm{Ker}(\delta)=0\), it suffices to show
\(\mathrm{Hom}(\mathrm{Ker}(\delta),\mathcal{O}_{x,\eta})=0\) for any $\eta\in
\mathrm{Irr}(\mathrm{Aut}(x))$.  But we have the identification
\[
  \mathrm{Hom}(\mathrm{Ker}(\delta),\mathcal{O}_{x,\eta}) =
  \mathrm{Ker}\bigl(\mathrm{Hom}(\mathcal{O}_{x,\xi},\mathcal{O}_{x,\eta}[1])\to
  \mathrm{Hom}(\QQ_{x,\xi},\mathcal{O}_{x,\eta}[1])\bigr).
\]
Thus, injectivity of \eqref{eqn:bijectivity} for $i=1$ implies that
$\ker(\delta)=0$, i.e., $\QQ_{x,\xi}\simeq\OO_{x,\xi}$.

\subsection{Injectivity of \eqref{eqn:bijectivity} for \(i=1\) follows from
generic injectivity for \(i=1\)}

Note that due to decomposition \eqref{Ox-decomposition}, injectivity of \eqref{eqn:bijectivity} for \(i=1\) and all $x=y$ and $\xi$ is equivalent to injectivity
of 
\begin{equation}
  \mathrm{Ext}^1(\mathcal{O}_x,\mathcal{O}_x)\to
  \mathrm{Hom}^1_{\mathcal{T}}(F(\mathcal{O}_x),F(\mathcal{O}_x))
  \label{eqn:injectivity-i=1}
\end{equation}
for all closed points $x$.

By assumption, \(G\circ F\) is the Fourier-Mukai functor $\Phi_{\mathcal{Q}}$ given by some kernel
\(\mathcal{Q}\). For any point \(x:\mathrm{Spec}(k)\to
\XX\), the pullback \((x\times\id)^\ast \mathcal{Q}\) is exactly 
\[
    \QQ_x:=G\circ F(\OO_x)=\bigoplus_{\xi} \xi^\vee\ot\QQ_{x,\xi},
\]
so it is a sheaf. Hence, by Lemma \ref{lem:flatness-criterion}, $\QQ$ is flat
over \(\mathcal{X}\) (with respect to the first projection). Let
\(\varepsilon\colon \mathcal{Q}\to \Delta_\ast\mathcal{O_X}\) be the counit of adjunction
(recall that it is defined on the level of Fourier-Mukai kernels, see \cite[App.\ A]{caldararu-willerton}). This map is in fact surjective since $(x\times\id)^*(\varepsilon)$ is the
surjective map 
$$\QQ_x=\bigoplus_\xi \xi^\vee\ot \QQ_{x,\xi}\to\bigoplus_\xi \xi^\vee\ot \OO_{x,\xi}=\OO_x$$
given by the direct sum of the maps $\de:\QQ_{x,\xi}\to \OO_{x,\xi}$ which were shown to be surjective in \ref{red-inj-i1-sec}.
Thus, we have an exact sequence of coherent sheaves on $\XX\times \XX$
\[
0\to \mathcal{K}\to \mathcal{Q}\to \Delta_\ast\mathcal{O_X}\to 0.
\]
It follows that \(\mathcal{K}\) is flat over \(\mathcal{X}\) (via the first projection). 

If we assume injectivity of \eqref{eqn:injectivity-i=1} for a generic $x\in \XX$,
then as above we deduce that for generic point $x$, one has \((x\times\id)^*\mathcal{K}=0\). 
Since \(\mathcal{K}\) is flat over $\XX$, it follows that $\KK=0$, and 
the adjunction morphism $\varepsilon$ is an isomorphism.

\subsection{Generic injectivity for \(i=1\)}

We want to prove that for generic $x\in\XX$, the natural map \eqref{eqn:injectivity-i=1} 
is injective. This is equivalent to injectivity of the natural map
\begin{equation}\label{Ext1-main-map}
\Ext^1(\OO_x,\OO_x)\to \Ext^1(\QQ_x,\QQ_x).
\end{equation}

Recall that by \cite[Thm.\ 4.4]{kresch-geometrydm}, $\mathcal{X}$ is a quotient stack. 
Hence, by \cite[Prop.\ 5.2]{kresch-geometrydm}, there is a Zariski open substack \(\mathcal{Y\subset
X}\) of the form \(\mathcal{Y}\cong [Y/G]\), where \(Y\) is a quasi-projective
variety and \(G\) is a finite group. Let \(N\) be the kernel of the action and
\(H = G/N\). By shrinking \(Y\), we can assume \(H\) acts freely. Set the
quotient map to be \(\pi_Y\colon Y\to \bar{Y} = Y/H\). Denote also by \(\QQ\)
the sheaf \(\QQ\) restricted to \(\mathcal{Y\times Y}\).

By Theorem \ref{thm:bgll}, there is an equivalence of categories between
\(\mathfrak{Coh}(\mathcal{Y})\) and \(\alpha\)-twisted sheaves on
\(\bar{Y}_N=(\mathrm{Irr}(N)\times Y)/H\), where \(\alpha\) is the corresponding Brauer
class.  Let \(\QQ'\) be the twisted sheaf on $\bar{Y}_N\times\bar{Y}_N$ corresponding to \(\QQ\) under the corresponding equivalence
for the product. By Lemma \ref{lem:flatness-bgll}, \(\QQ'\) is still flat over
\(\pi_1\). Let \(\pi\colon U\to \bar{Y}_N\) be an \'etale cover trivializing \(\alpha\).
Then the twisted sheaf \((\pi\times\id)^*\QQ'\) on \(U\times \bar{Y}_N\)
satisfies the conditions of Lemma \ref{lem:bridgeland-hilbert-argument}.
Hence, we have an isomorphism
$$(\pi\times\pi)^*\QQ'\simeq (f\times \id_U)^*\OO_{\Xi}$$
(after possibly shrinking $U$), for some finite morphism $f:U\to \mathrm{Hilb}_{\ell}(U)$.
From this we get an isomorphism of functors (see e.g., \cite[Ex.\ 5.12]{huybrechts-fm})
\begin{equation}\label{FM-isom-eq}
\pi^*\circ \Phi_{\QQ'}\circ \pi_*\simeq \Phi_{\OO_{\Xi}}\circ f_*.
\end{equation}

Since $f$ is finite, the induced map on tangent spaces,
$$f_*:\Ext^1(\OO_u,\OO_u)\simeq T_uU\to \Ext^1(\OO_{f(u)},\OO_{f(u)})\simeq T_{f(u)}\mathrm{Hilb}_{\ell}(U),$$
is an isomorphism, hence, injective, for generic $u\in U$. Furthermore, since $\Xi$ is the universal family, the map
$$\Phi_{\OO_{\Xi}}:\Ext^1(\OO_z,\OO_z)\simeq T_z\mathrm{Hilb}_{\ell}(U)\to \Ext^1(\Xi_y,\Xi_y)$$
is injective for every $z\in\mathrm{Hilb}_{\ell}(U)$.
Hence, the composition $\Phi_{\OO_{\Xi}}\circ f_*$ induces an injective map from $\Ext^1(\OO_u,\OO_u)$, for generic $u\in U$.

Using isomorphism \eqref{FM-isom-eq}, we get that the map
$$\pi^*\circ\Phi_{\QQ'}\circ\pi_*:\Ext^1(\OO_u,\OO_u)\to \Ext^1(\pi^*\Phi_{\QQ'}\pi_*\OO_u,\pi^*\Phi_{\QQ'}\pi_*\OO_u)$$
is injective for generic $u\in U$. Now we observe that $\pi_*\OO_u=\OO_{\pi(u)}$ and the map
$$\pi_*:\Ext^1(\OO_u,\OO_u)\to \Ext^1(\OO_{\pi(u)},\OO_{\pi(u)})$$
is an isomorphism since $\pi$ is \'etale. Thus, we deduce that $\pi^*\circ\Phi_{\QQ'}$ induces an injective map from 
$\Ext^1(\OO_{\ov{y}},\OO_{\ov{y}})$ for generic $\ov{y}\in\bar{Y}_N$. Hence, the same is true for the map induced by
$\Phi_{\QQ'}$.

Note that we can run the above argument for every connected component of $\bar{Y}_N$, so that the injectivity of
the map induced by $\Phi_{\QQ'}$ holds for a dense open subset of points $\ov{y}\in\bar{Y}_N$.
It remains to recall that by Lemma \ref{BGLL-points-lem}, the structure sheaves of generalized points $(y,\xi)$ on $\YY$ correspond to
the sheaves $\OO_{\ov{(\xi,y)}}$ on $\bar{Y}_N$. This gives the required injectivity of \eqref{Ext1-main-map} for
generic $x\in\YY$.

\bibliographystyle{amsalpha}
\bibliography{bob-ffcriterion}

\providecommand{\bysame}{\leavevmode\hbox to3em{\hrulefill}\thinspace}
\providecommand{\MR}{\relax\ifhmode\unskip\space\fi MR }
\providecommand{\MRhref}[2]{%
  \href{http://www.ams.org/mathscinet-getitem?mr=#1}{#2}
}
\providecommand{\href}[2]{#2}
\begin{thebibliography}{HRLMnSdS09}

\bibitem[BFN08]{bzfn-integraltransforms}
David {Ben-Zvi}, John {Francis}, and David {Nadler}, \emph{{Integral Transforms
  and Drinfeld Centers in Derived Algebraic Geometry}}, arXiv e-prints (2008),
  arXiv:0805.0157.

\bibitem[BGLL17]{bgll-17}
Daniel {Bergh}, Sergey {Gorchinskiy}, Michael {Larsen}, and Valery {Lunts},
  \emph{{Categorical measures for finite group actions}}, arXiv e-prints
  (2017), arXiv:1709.00620.

\bibitem[BLS16]{bergh-lunts-schnurer}
Daniel Bergh, Valery~A. Lunts, and Olaf~M. Schn\"{u}rer, \emph{Geometricity for
  derived categories of algebraic stacks}, Selecta Math. (N.S.) \textbf{22}
  (2016), no.~4, 2535--2568. \MR{3573964}

\bibitem[BO95]{bondal-orlov-sod}
A.~{Bondal} and D.~{Orlov}, \emph{{Semiorthogonal decomposition for algebraic
  varieties}}, eprint arXiv:alg-geom/9506012, June 1995.

\bibitem[Bri99]{bridgeland-triangulated-99}
Tom Bridgeland, \emph{Equivalences of triangulated categories and
  {F}ourier-{M}ukai transforms}, Bull. London Math. Soc. \textbf{31} (1999),
  no.~1, 25--34. \MR{1651025}

\bibitem[C{\u{a}}l02]{cal-k3}
Andrei C{\u{a}}ld\u{a}raru, \emph{Nonfine moduli spaces of sheaves on {$K3$}
  surfaces}, Int. Math. Res. Not. (2002), no.~20, 1027--1056. \MR{1902629}

\bibitem[CT08]{CT-derivedmckay}
Jiun-Cheng Chen and Hsian-Hua Tseng, \emph{A note on derived {M}c{K}ay
  correspondence}, Math. Res. Lett. \textbf{15} (2008), no.~3, 435--445.
  \MR{2407221}

\bibitem[CW10]{caldararu-willerton}
Andrei C\u{a}ld\u{a}raru and Simon Willerton, \emph{The {M}ukai pairing. {I}.
  {A} categorical approach}, New York J. Math. \textbf{16} (2010), 61--98.
  \MR{2657369}

\bibitem[Gen17]{genovese-adjunctions}
Francesco Genovese, \emph{Adjunctions of quasi-functors between
  {DG}-categories}, Appl. Categ. Structures \textbf{25} (2017), no.~4,
  625--657. \MR{3669175}

\bibitem[HR17]{hall-rydh-perfect}
Jack Hall and David Rydh, \emph{Perfect complexes on algebraic stacks}, Compos.
  Math. \textbf{153} (2017), no.~11, 2318--2367. \MR{3705292}

\bibitem[HRLMnSdS09]{rms-ffpositivechar}
Daniel Hern\'andez~Ruip\'erez, Ana~Cristina L\'opez Mart\'\i~n, and Fernando
  Sancho~de Salas, \emph{Relative integral functors for singular fibrations and
  singular partners}, J. Eur. Math. Soc. (JEMS) \textbf{11} (2009), no.~3,
  597--625. \MR{2505443}

\bibitem[Huy06]{huybrechts-fm}
D.~Huybrechts, \emph{Fourier-{M}ukai transforms in algebraic geometry}, Oxford
  Mathematical Monographs, The Clarendon Press, Oxford University Press,
  Oxford, 2006. \MR{2244106}

\bibitem[Kre09]{kresch-geometrydm}
Andrew Kresch, \emph{On the geometry of {D}eligne-{M}umford stacks}, Algebraic
  geometry---{S}eattle 2005. {P}art 1, Proc. Sympos. Pure Math., vol.~80, Amer.
  Math. Soc., Providence, RI, 2009, pp.~259--271. \MR{2483938}

\bibitem[LM17]{martin-ffqproj}
A.~C. L{\'o}pez~Mart{\'i}n, \emph{Fully faithfulness criteria for
  quasi-projective schemes}, Collectanea Mathematica \textbf{68} (2017), no.~2,
  219--227 (English).

\bibitem[{Nir}08]{nironi-08}
Fabio {Nironi}, \emph{{Grothendieck Duality for Deligne-Mumford Stacks}}, arXiv
  e-prints (2008), arXiv:0811.1955.

\bibitem[SdS09]{salas-ff-09}
Fernando Sancho~de Salas, \emph{Koszul complexes and fully faithful integral
  functors}, Bull. Lond. Math. Soc. \textbf{41} (2009), no.~6, 1085--1094.
  \MR{2575339}

\end{thebibliography}
\end{document}